\definecolor{crimsonglory}{rgb}{0.75, 0.0, 0.2}
\definecolor{darkblue}{rgb}{0.0, 0.0, 0.55}
	\definecolor{deepskyblue}{rgb}{0.0, 0.75, 1.0}
\newtheoremstyle{mystyle}
  {8pt}
  {3pt}
  {\em}
  {}
  {\scshape}
  {.}
  {3pt}
  {}
\theoremstyle{mystyle}
\newcounter{exe}
\newtheorem{thm}{\textsc{Theorem}}
\newtheorem{cor}[exe]{\textsc{Corollary}}
\newtheorem{lemma}[exe]{\textsc{Lemma}}
\newtheorem{prop}[exe]{\textsc{Proposition}}
\theoremstyle{definition}
\title[]{On Artin's Conjecture for Pairs of Diagonal Forms}
\author{Jo\~ao Campos Vargas}\email{jcvargas@princeton.edu}
\address{Department of Mathematics, Princeton University, Princeton, NJ 08540.}
\date{\today}
\newcommand{\ord}{\mathop{\mathrm{ord}}}
\begin{document}

\begin{abstract}


Let $p$ be an odd prime and $d = p^{\tau}(p-1)$. In the spirit of Aritn's conjecture, consider the system of two diagonal forms of degree $d$ in $s$ variables given by \begin{equation*}\begin{split}
a_1x_1^d + \cdots + a_sx_s^d = 0\\
b_1x_1^d + \cdots + b_sx_s^d = 0
\end{split}
\end{equation*} with $a_i, b_i \in \mathbb{Q}_p$.
For $s > 2 \frac{p}{p-1}d^2 - 2d$, this paper shows that this system has a non-trivial $p$-adic solution for every $\tau \ge 3, p \ge 7$, and for every $\tau = 2, p \ge \frac{C}{2}+4$, where $C \le 9996$. Moreover, for $s > (2\frac{p}{p-1} + \frac{C-3}{2p-2})d^2 - 2d$, this system will have a non-trivial $p$-adic solution for every $\tau = 1, p \ge 5$.\end{abstract}

\maketitle

\section{Introduction}\label{intro}

\par The integer solvability of a homogeneous system of diagonal forms is a variation of the Waring's problem that is interesting in its own right. Even though the Hasse principle may not apply directly, a natural first step is to find conditions for which such systems always have non-trivial $p$-adic solutions for all primes $p$. E. Artin conjectured that a system of $R$ homogeneous diagonal forms of degree $d$ has a non-trivial $p$-adic solution for every prime $p$ as long as the number of variables is at least $s \ge Rd^2 + 1$. However, this is only known to be true when $R = 1$. The best results towards this conjecture were given by Knapp (c.f. Theorem 2 in \cite{knapp2001systems}) and Skinner (c.f. Theorem A in \cite{skinner2020solvability}), proving that lower bounds for $s$ close to $R^2d^2$ are sufficient.

\par This paper explores this conjecture for the case $R = 2$. Consider a system of diagonal forms of degree $d$ given by\begin{equation}
\begin{split}
a_1x_1^d + \cdots + a_sx_s^d = 0\\
b_1x_1^d + \cdots + b_sx_s^d = 0
\end{split} \tag{$1$}
\end{equation}
with $a_i, b_i \in \mathbb{Q}_p$. We look for conditions on $s$ for which such system always has a non-trivial $p$-adic solution. Davenport and Lewis started this exploration in \cite{davenport1967two}, proving that Artin's conjecture holds whenever $d$ is odd. More recently, Br\"{u}dern and Godinho showed in \cite{brudern2002artin} that the conjecture is true for all odd primes $p$ and $d \ne p^{\tau}(p-1), \tau \ge 1$. For $d = p^{\tau}(p-1), \tau \ge 1$ the results in \cite{brudern2002artin} prove that the bound $s \ge 4d^2$ is sufficient to guarantee a non-trivial $p$-adic solution for every odd prime $p$. Motivated by this last result, this paper works on closing the gap between $4d^2$ and $2d^2 + 1$ when $d = p^{\tau}(p-1), \tau \ge 1$.

\par This paper was inspired by a weaker result which appears in the work of Godinho and de Souza Neto in \cite{godinho2013pairs} and shows that the bound $s > 2\frac{p}{p-1}d^2 - 2d$ is sufficient for large values of $\tau$ (roughly $\tau > \frac{p}{2}$). The main ingredient of this paper came in the form of \mbox{Proposition \ref{main-lemma}}, showing that the same lower bound for $s$ is sufficient for all $\tau \ge 3$. For $\tau = 1, 2$ a new tool was derived from a combinatorial result of Alon and Dubiner (c.f. Theorem 1 in \cite{alon1995lattice}). In application this takes the form of Lemma \ref{alon1}, and it allows us to deal with large primes efficiently.

\par The main result of this paper is described in the next theorem.

\begin{thm} \label{main} Let $p$ be an odd prime. Consider the system $(1)$ with $d = p^{\tau}(p-1), \tau \ge 1$. Let $C \ge 3$ be a constant for which Lemma \ref{alon1} holds. Assume that $(1)$ satisfies one of the following conditions:
\begin{enumerate}
 \item[(i)] $\tau \ge 3, s > 2\frac{p}{p-1}d^2 - 2d$, and $p \ge 7$.
\item[(ii)] $\tau = 2, s > 2\frac{p}{p-1}d^2 - 2d$, and $p \ge \frac{C}{2} + 4$.
\item[(iii)] $\tau = 1, s > (2\frac{p}{p-1} + \frac{C-3}{2p-2})d^2 - 2d$, and $p \ge 5$.
\end{enumerate}
Then the system $(1)$ has a non-trivial $p$-adic solution.
\end{thm}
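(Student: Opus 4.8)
The plan is to follow the Davenport--Lewis strategy for pairs of additive forms: normalize, reduce $p$-adic solvability of $(1)$ to a congruence modulo a bounded power of $p$ carrying a non-singularity condition, solve that congruence by a zero-sum argument, and lift. Clearing denominators we may take $a_i,b_i\in\mathbb{Z}_p$, not all zero. Replacing $(F,G)$ by an $\mathrm{SL}_2(\mathbb{Z}_p)$-combination, each $x_i$ by $p^{k_i}x_i$, and permuting the variables, we pass to a $p$-normalized representative, i.e. one minimizing the usual integral invariant. Normalization yields the two standard facts I will use: (a) the expected lower bounds on the number of variables at each level -- in particular about $s/d$ of the variables lie at levels $\le\tau$, and already the level-$0$ count is of order $s/d$; and (b) a non-degeneracy statement: no nonzero $\mathbb{F}_p$-combination of the two reduced forms is supported on very few variables, so the level-$0$ coefficient vectors $(\bar a_i,\bar b_i)\in\mathbb{F}_p^2$ are not all proportional, and no single direction carries too large a share of them.

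Next comes the lifting step, which is where $d=p^{\tau}(p-1)$ matters: since $p\mid d$ a naive Hensel argument fails, and the relevant fact is $\{u^d:u\in\mathbb{Z}_p^{\times}\}=1+p^{\tau+1}\mathbb{Z}_p$. From this one obtains a lifting lemma: if $(1)$ has a solution modulo $p^{\tau+1}$ in which two variables $x_i,x_j$ are units at level $0$ with $a_ib_j-a_jb_i\in\mathbb{Z}_p^{\times}$ -- a \emph{pivot pair} -- then $(1)$ has a non-trivial $p$-adic solution; one corrects the solution one power of $p$ at a time, at the $k$-th step ($k\ge\tau+1$) adjusting only $x_i,x_j$, which is possible because $x^d$ sweeps out all of $1+p^{\tau+1}\mathbb{Z}_p\supseteq 1+p^k\mathbb{Z}_p$ and the pivot matrix is invertible over $\mathbb{Z}_p$. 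Moreover, when switched on the $i$-th variable contributes the fixed vector $(a_i,b_i)\bmod p^{\tau+1}$ to $(F,G)$ -- the $d$-th-power freedom is invisible here since $x^d\equiv 1\pmod{p^{\tau+1}}$ for every unit $x$ -- and contributes $(0,0)$ when switched off. Hence producing the required congruence solution reduces to a \emph{zero-sum problem}: find a nonempty sub-multiset $S$ of $\{(a_i,b_i)\bmod p^{\tau+1}\}_{i}\subset(\mathbb{Z}/p^{\tau+1})^2$ with $\sum_{i\in S}=0$ and with $S$ containing a level-$0$ pivot pair (so in particular $S$ must meet at least two directions modulo $p$).

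It remains to solve this zero-sum problem, and the argument now splits. Under hypotheses (i) and (ii) the normalization count provides $s/d>2(p^{\tau+1}-1)=D\big((\mathbb{Z}/p^{\tau+1})^2\big)-1$ effective vectors -- just past the Davenport constant -- so a nonempty zero-sum sub-multiset exists; the real work is to upgrade it to one carrying a pivot pair, and for $\tau\ge 3$, $p\ge 7$ this is exactly the content of Proposition \ref{main-lemma}, which I invoke directly. For $\tau=1,2$ the bare Davenport bound is too crude to force the pivot pair with the variables on hand, so instead I run the construction in stages -- solve modulo $p$ with level-$0$ variables, correct the resulting carry with level-$1$ variables, and so on up to modulo $p^{\tau+1}$ -- and feed Lemma \ref{alon1}, the Alon--Dubiner-type input with constant $C$, into the first stage to extract a zero-sum subsequence of controlled length; together with the non-degeneracy from (b) this yields the two required directions modulo $p$, hence the pivot pair. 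The price of Lemma \ref{alon1} is roughly $Cp$ level-$0$ vectors: when $\tau=2$ there are about $2p^{3}$ of them, which beats $Cp$ once $p\ge C/2+4$, so $s>2\frac{p}{p-1}d^2-2d$ still suffices; when $\tau=1$ there are only about $2p^{2}$, so one must enlarge $s$ by roughly $\tfrac{C}{2}\,p\cdot d$, and this is precisely the extra term $\frac{C-3}{2p-2}d^2$ appearing in (iii). Assembling the three cases proves the theorem.

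The main obstacle throughout is the one just isolated: securing the additive (zero-sum) condition and the non-singularity (pivot) condition \emph{simultaneously}, with a variable budget of only about $2p^{\tau+1}d$. That tension is what makes the level-$0$ count and the non-degeneracy from normalization so delicate to spend, and it is why the proof is forced to bring in Proposition \ref{main-lemma} for $\tau\ge 3$ and the Alon--Dubiner-based Lemma \ref{alon1} for $\tau=1,2$; it is also why the small primes and the small values of $\tau$ -- where the budget is tightest -- are the cases that require the strongest hypotheses.
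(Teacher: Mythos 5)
Your reduction of the problem is correct and matches the paper's: after normalization, since $x^d\equiv 1\pmod{p^{\tau+1}}$ for units and $\equiv 0$ otherwise, solving $(2)$ with non-singular support is exactly the problem of finding a zero-sum sub-multiset of $\{(a_i,b_i)\bmod p^{\tau+1}\}$ whose support has rank two modulo $p$ (your ``pivot pair''). The genuine gap is in how you claim to solve that problem. For $\tau\ge 3$ you write that upgrading a zero-sum set to one carrying a pivot pair ``is exactly the content of Proposition \ref{main-lemma}, which I invoke directly.'' It is not: Proposition \ref{main-lemma} is a statement about a \emph{single} pair of congruences modulo $p$ in $3p-3$ variables at one level, asserting a solution with one of the first $p$ variables nonzero. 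It says nothing about $(\mathbb{Z}/p^{\tau+1})^2$, and it cannot be applied until one has already manufactured, at level $\tau$, a stock of $p$ ``primary'' variables (each tracing back to a non-singular level-zero solution) together with $2p-3$ further variables at that same level. Producing those is the bulk of the proof and is entirely absent from your argument: one needs the Davenport--Lewis contraction lemma (Lemma \ref{primary-var}) to convert $2p^{\tau+1}$ level-zero variables into $p^{\tau}$ primary variables at levels $\ge 1$; Olson's theorem (Lemma \ref{olson}) plus auxiliary ``stepstone'' secondary variables at levels $1,\dots,\tau-1$ to push $p$ primaries up to level $\tau$ (Lemma \ref{contraction2}); and a separate count, different in each of the three cases, to produce the $2p-3$ secondary variables at level $\tau$. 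The hypotheses $p\ge 7$, $p\ge \tfrac{C}{2}+4$, and the extra term $\tfrac{C-3}{2p-2}d^2$ in (iii) are consumed precisely in that last count, so a proof that never performs it has not used its hypotheses.

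Two further points would need repair even in your staged description for $\tau=1,2$. First, contractions of secondary variables must land at \emph{exactly} the next level rather than overshooting level $\tau$ (a secondary variable above level $\tau$ is useless); this is why Lemma \ref{alon1} and Corollary \ref{olson3} insist that at least one entry of the zero-sum be not divisible by $p^2$, a point your sketch does not engage. Second, the Davenport-constant remark ($m_0\ge 2p^{\tau+1}-1 = D((\mathbb{Z}/p^{\tau+1})^2)$) guarantees \emph{some} nonempty zero-sum but gives no control on its support, and the paper never argues that way; the rank-two condition is secured constructively by tracking primary variables through every contraction, not by post-hoc upgrading of an arbitrary zero-sum set. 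So while your outline identifies the right framework and the right two new ingredients (Proposition \ref{main-lemma} and Lemma \ref{alon1}), the combinatorial construction that actually proves the theorem --- the level-by-level bookkeeping of primary and secondary variables --- is missing, and the one step you claim closes the argument for $\tau\ge3$ rests on a misreading of what Proposition \ref{main-lemma} asserts.
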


\par It will be proved later in Lemma \ref{alon1} that $C \le 9996$. Combining the results established in \cite{brudern2002artin} with Theorem \ref{main} has an interesting consequence towards Artin's conjecture:

\begin{thm} \label{artin-eps} Let $\epsilon > 0$. Assume that the system $(1)$ has at least $s > (2+\epsilon)d^2$ variables. Then $(1)$ has a non-trivial $p$-adic solution for all sufficiently large primes $p$.
\end{thm}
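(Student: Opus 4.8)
The plan is to derive Theorem~\ref{artin-eps} purely formally by combining the already-established results, splitting on whether or not $d$ has the special form $p^\tau(p-1)$. Fix $\epsilon > 0$ and suppose $s > (2+\epsilon)d^2$. First I would dispose of the easy range of primes: by the results of Br\"{u}dern and Godinho in \cite{brudern2002artin}, for any odd prime $p$ with $d \ne p^\tau(p-1)$ for all $\tau \ge 1$, the system $(1)$ already has a non-trivial $p$-adic solution (since $s > 2d^2$ in particular), and the same reference handles $d$ odd. So the only primes that could fail are those $p$ for which $d = p^\tau(p-1)$ for some $\tau \ge 1$; for such $p$ we necessarily have $p - 1 \mid d$, hence $p \le d+1$, so there are only finitely many of them once $d$ is fixed. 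But $d$ itself is not fixed in the statement — however, I would instead fix $\epsilon$ and argue about \emph{all} pairs $(d,p)$ with $d = p^\tau(p-1)$ simultaneously, showing each such pair with $p$ large enough is covered by Theorem~\ref{main}.

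The key step is therefore: for $d = p^\tau(p-1)$, verify that the hypothesis $s > (2+\epsilon)d^2$ implies one of the three conditions (i)--(iii) of Theorem~\ref{main}, provided $p$ is sufficiently large (in terms of $\epsilon$ alone, uniformly in $\tau$). Observe that $\frac{p}{p-1} \to 1$ as $p \to \infty$, so for $p$ large enough we have $2\frac{p}{p-1} < 2 + \epsilon$, and also $2\frac{p}{p-1}d^2 - 2d < (2+\epsilon)d^2$, which gives condition (i) for all $\tau \ge 3$ (once $p \ge 7$). For $\tau = 2$, condition (ii) needs $p \ge \frac{C}{2}+4$ with $C \le 9996$, i.e. $p \ge 5002$, which is again a finite constraint satisfied for $p$ large. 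For $\tau = 1$, condition (iii) requires $s > (2\frac{p}{p-1} + \frac{C-3}{2p-2})d^2 - 2d$; here the extra term $\frac{C-3}{2p-2}$ also tends to $0$ as $p \to \infty$, so for $p$ large enough $2\frac{p}{p-1} + \frac{C-3}{2p-2} < 2+\epsilon$ and condition (iii) follows from $s > (2+\epsilon)d^2$. Thus there is a threshold $p_0(\epsilon)$ such that every prime $p \ge p_0(\epsilon)$ with $d = p^\tau(p-1)$ for some $\tau \ge 1$ is covered.

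To finish, I would combine the two cases: given $\epsilon > 0$, let $p_0 = p_0(\epsilon)$ be the threshold above (taking it at least $7$, and at least $5002$ to absorb the $\tau = 2$ and $\tau = 1$ constraints). For any prime $p \ge p_0$: either $d \ne p^\tau(p-1)$ for all $\tau \ge 1$ (and $d$ even; the odd case is immediate), in which case \cite{brudern2002artin} gives the solution since $s > 2d^2$; or $d = p^\tau(p-1)$ for some $\tau \ge 1$, in which case $\tau \ge 3$, $\tau = 2$, or $\tau = 1$ each falls under the corresponding part of Theorem~\ref{main} by the inequalities just checked. Either way $(1)$ has a non-trivial $p$-adic solution, which is exactly the claim. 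The only mild subtlety — the ``main obstacle,'' though it is quite minor — is making sure the threshold $p_0(\epsilon)$ can be chosen uniformly in $\tau$; this works precisely because the degree-dependent quantities in conditions (i)--(iii) depend on $\tau$ only through $d = p^\tau(p-1)$ and the inequalities being verified are homogeneous of degree $2$ in $d$ (so the $d^2$ cancels), leaving conditions on $p$ alone.
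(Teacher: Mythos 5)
Your proposal is correct and follows essentially the same route as the paper: invoke Br\"{u}dern--Godinho for $d \ne p^{\tau}(p-1)$ (where $s > 2d^2$ suffices) and Theorem~\ref{main} for $d = p^{\tau}(p-1)$, choosing $p$ large enough that the coefficients $2\tfrac{p}{p-1}$ and $2\tfrac{p}{p-1}+\tfrac{C-3}{2p-2}$ drop below $2+\epsilon$ and that $p \ge \tfrac{C}{2}+4$. The paper compresses all three cases into the single condition $\tfrac{C+1}{2p-2} < \epsilon$, but the underlying argument, including the uniformity in $\tau$, is the same.
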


\begin{proof} Let $p \ge \frac{C}{2} + 4$ be such that $\frac{C+1}{2p-2} < \epsilon$. By Theorem \ref{main} the system $(1)$ has a non-trivial solution for all $d = p^{\tau}(p-1), \tau \ge 1$. For other values of $d$, Theorem 1 in \cite{brudern2002artin} shows that the condition $s > 2d^2$ suffices to ensure a non-trivial solution. The result follows.
\end{proof}

\par At this point it is worth remarking a possible improvement for the theory via Lemma \ref{alon1}. In fact, Lemma \ref{olson} indicates that $C = 3$ might be sufficient for Lemma \ref{alon1}. If that is the case, then Theorem \ref{main} proves that the bound $s > 2\frac{p}{p-1}d^2 - 2d$ suffices for all $d = p^{\tau}(p-1), \tau \ge 1$, and $p \ge 7$. The work of Godinho and de Souza Neto (c.f. Theorem 1.1 in \cite{godinho2011pairs}) already shows that this lower bound for $s$ is also sufficient for all values of $d$ for $p = 3, 5$. Hence we would have that for every $p$ odd, the bound $s > \max \{ 2\frac{p}{p-1}d^2 - 2d, 2d^2\}$ is sufficient to guarantee a non-trivial solution for the system $(1)$.

\section{Addendum}

\par Shortly after the submission of this work, a paper by M. S. Kaesberg (c.f. \cite{kaesberg2020artin}) appeared on arXiv claiming to prove the sufficiency of the bound $s \ge 2d^2+1$ for all odd primes. Even though both papers seem to follow the same outline layed out Br\"{u}udern and Godinho, the methods used by Kaesberg are different from the ones presented here. The main difference comes with Kaesberg's refinement of classes of vectors, introducing the notion of color nuances. This allows greater control over specific systems of congruences, and many technical lemmas on Kaesberg's work rely on this classification. On the other hand, the foundation of this paper rests on Proposition \ref{main-lemma}, which has a natural combinatorial meaning and, differently from most other results presented here, an algebraic proof. Although this result alone does not imply Artin's conjecture -- and in fact does not appear in Kaesberg's work -- it is the author's hope that it can also be useful in similar problems.

\section{Normalization}\label{normalization}

The solvability of a system $(1)$ is equivalent to the solvability of many others. Identify $(1)$ with \[A = \begin{pmatrix} a_1 & a_2 & \cdots & a_s \\ b_1 & b_2 & \cdots & b_s \end{pmatrix} \in M_{2 \times s} (\mathbb{Q}_p),\] its corresponding matrix. Let $M \in \text{GL}_2(\mathbb{Q}_p)$ represent a non-singular linear combination of the rows of $A$. Let $P$ be a permutation matrix and $D = \text{diag}(p^{d\nu_1}, \cdots, p^{d\nu_s})$ with $\nu_i \in \mathbb{Z}$. Notice that the system $A$ has a non-trivial $p$-adic solution if and only if $MADP$ does. This defines an equivalence relation in $M_{2\times s}(\mathbb{Q}_p)$.

\par Let $A \in M_{2 \times s}(\mathbb{Q}_p)$. For $I \subset \{1, \cdots, s\}$ define $A_I$ to be the matrix obtained by choosing the columns of $A$ labeled by $I$. Define the function
\[ \theta(A) = \sum_{I \subset \{1, \cdots, s\} \atop \#I = 2} \text{ord}_p(\det(A_I)) \in \mathbb{Z} \cup \{\infty \}.\]

The function $\theta$ behaves nicely within equivalence classes. In fact, for $M \in \text{GL}_2(\mathbb{Q}_p)$, $P \in \text{GL}_s(\mathbb{Z})$, $D = \text{diag}(p^{d\nu_1}, \cdots, p^{d\nu_s})$ as above, $A' = MADP$, we have
\[ \theta(A') = \theta(A) + \binom{s}{2} \text{ord}_p(\det(M)) + (s-1)(\nu_1 + \cdots + \nu_s).\]

\par Observe that $\theta(A) = \infty$ hardly happens. In fact, due to the compactness of $\mathbb{Z}_p$, the non-trivial solvability of all systems in $M_{2\times s} (\mathbb{Q}_p)$ is implied by the non-trivial solvability of those with $\theta(A) < \infty$ through a standard limiting argument. For a complete proof, c.f. Section 5 in \cite{davenport1967two}.

\par A system $(1)$ is said to be normalized if its corresponding matrix has entries in $\mathbb{Z}_p$ and the least, finite $\theta$ value among all equivalent systems with entries in $\mathbb{Z}_p$. Normalized systems have a few desirable properties and from now on we assume $(1)$ to be normalized.

\section{p-adic solvability} \label{solvability}

In order to solve $(1)$ $p$-adically it suffices to find a solution mod $p^{\tau + 1}$ with non-singular support in $\mathbb{F}_p$. A variation of Hensel's lemma provides us with this result (c.f. Lemma 7 in \cite{davenport1967two}):

\begin{lemma}\label{reduction} Let $\ord_p(d) = \tau$. Consider the reduction of $(1)$ mod $p^{\tau + 1}$: 
\begin{equation}
\begin{split}
a_1x_1^d + \cdots + a_sx_s^d \equiv 0 \pmod{p^{\tau + 1}}\text{ }\\
b_1x_1^d + \cdots + b_sx_s^d \equiv 0 \pmod{p^{\tau + 1}}.
\end{split} \tag{$2$}
\end{equation}
Let $\mathbf{x} = (x_1, \cdots, x_s)$ be a solution for $(2)$ and let $I = \{ i  :  x_i \not \equiv 0 \pmod{p}\}$. Let $A_I \in M_{2 \times |I|}(\mathbb{Z}_p)$ be the matrix formed by the columns of $A$ with indices in $I$. Assume that $A_I$ has rank two when reduced mod $p$. Then $(1)$ has a non-trivial $p$-adic solution.
\end{lemma}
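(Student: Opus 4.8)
The plan is to establish this through a multiplicative Newton-type lifting that exploits the linear dependence of the two forms on the quantities $x_k^d$. The one external input is the standard description of $d$-th powers of $p$-adic units: since $p$ is odd and $d = p^{\tau}(p-1)$, one has $(\mathbb{Z}_p^{\times})^d = 1 + p^{\tau+1}\mathbb{Z}_p$. (Writing $\mathbb{Z}_p^{\times} \cong \mu_{p-1}\times(1+p\mathbb{Z}_p)$, the $d$-th power map annihilates $\mu_{p-1}$ because $(p-1)\mid d$, and on $1+p\mathbb{Z}_p \cong p\mathbb{Z}_p$ it is multiplication by $d$; as $\ord_p(d) = \tau$ and $p-1$ is a unit, the image is $p^{\tau+1}\mathbb{Z}_p$.) In particular, for every $\alpha \in p^{\tau+1}\mathbb{Z}_p$ there is a unit $\xi$ with $\xi^d = 1 + \alpha$.

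Now, because $A_I$ has rank two mod $p$, some two of its columns --- say those indexed by $i, j \in I$ --- are linearly independent over $\mathbb{F}_p$, so $\det A_{\{i,j\}} \in \mathbb{Z}_p^{\times}$. Lift $\mathbf{x}$ to $\mathbb{Z}_p^s$; note that $x_i, x_j \in \mathbb{Z}_p^{\times}$ and, writing $F_1(\mathbf{y}) = \sum_k a_k y_k^d$ and $F_2(\mathbf{y}) = \sum_k b_k y_k^d$, that $F_1(\mathbf{x}), F_2(\mathbf{x}) \in p^{\tau+1}\mathbb{Z}_p$ by $(2)$. I would then seek a true zero of the shape $\mathbf{x}' = (x_1,\dots,x_i\xi,\dots,x_j\eta,\dots,x_s)$, altering only the $i$-th and $j$-th coordinates by units. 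With $\alpha = \xi^d-1$ and $\beta = \eta^d-1$, the equations $F_1(\mathbf{x}') = F_2(\mathbf{x}') = 0$ read
\[
\begin{pmatrix} a_i x_i^d & a_j x_j^d \\ b_i x_i^d & b_j x_j^d \end{pmatrix}\begin{pmatrix}\alpha\\\beta\end{pmatrix} \;=\; -\begin{pmatrix}F_1(\mathbf{x})\\F_2(\mathbf{x})\end{pmatrix}.
\]
The coefficient matrix has determinant $x_i^d x_j^d \det A_{\{i,j\}} \in \mathbb{Z}_p^{\times}$, so Cramer's rule gives a unique solution $(\alpha,\beta)$; since the right-hand side lies in $p^{\tau+1}\mathbb{Z}_p$ while all other entries are in $\mathbb{Z}_p$, we get $\alpha,\beta \in p^{\tau+1}\mathbb{Z}_p$. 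By the previous paragraph these are attained by units $\xi,\eta$, and $\mathbf{x}'$ is then a $p$-adic zero of $(1)$, non-trivial because $x_i\xi \neq 0$.

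The point worth flagging --- and the reason one cannot just invoke Hensel's lemma verbatim --- is that $\ord_p(d\,x^{d-1}) = \tau$ for a unit $x$, so a direct Newton iteration in the two variables $x_i, x_j$ would demand a starting solution modulo $p^{2\tau+1}$ rather than the $p^{\tau+1}$ available to us. The linearity of $F_1, F_2$ in $x_i^d$ and $x_j^d$ is precisely what closes this gap: the $2\times 2$ system above is solvable over $\mathbb{Z}_p$ with solution automatically in $p^{\tau+1}\mathbb{Z}_p$, which is exactly the set of perturbations realizable by $d$-th powers --- no precision is wasted and none is needed. Everything beyond this is bookkeeping, so I expect the only substantive points to be the identification $(\mathbb{Z}_p^{\times})^d = 1 + p^{\tau+1}\mathbb{Z}_p$ and the choice of the pair $\{i,j\}$ from the rank hypothesis.
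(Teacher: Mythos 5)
Your proof is correct and is essentially the standard argument behind the cited result (Lemma 7 of Davenport--Lewis), which the paper itself does not reprove: one exploits $1+p^{\tau+1}\mathbb{Z}_p\subseteq(\mathbb{Z}_p^{\times})^{d}$ together with the unit $2\times 2$ minor guaranteed by the rank hypothesis to absorb the error terms multiplicatively into two of the variables. The only cosmetic remark is that you only need the containment $1+p^{\tau+1}\mathbb{Z}_p\subseteq(\mathbb{Z}_p^{\times})^{d}$ (valid whenever $\ord_p(d)=\tau$), not the stated equality, which additionally uses $(p-1)\mid d$ --- true in the paper's setting but not needed.
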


For this reason, variables for which at least one of the corresponding coefficients is not divisible by $p$ play an important role throughout the argument. These will be referred to as variables at level zero, as defined in the next section. The proof of Theorem ~\ref{main} focuses on finding a solution for $(2)$ satisfying the conditions of Lemma \ref{reduction}.

\section{Classification of Variables} \label{variables}

A variable $x_i$ is said to be at level $l$ whenever \[\min\{ \text{ord}_p(a_i), \text{ord}_p(b_i)\} = l.\] Let $\mathcal{T}_l$ be the set of variables at level $l$ and let $m_l = \#\mathcal{T}_l$. For a subset $\mathcal{H} \subset \mathcal{T}_0$ define $q(\mathcal{H})$ to be the least possible number of elements not divisible by $p$ in the array $\{\lambda a_i + \mu b_i\}_{i \in \mathcal{H}}$, where $\lambda$ and $\mu$ range over the integers and are not both divisible by $p$. Define $q_0 = q(\mathcal{T}_0)$. The normalization of the system $(1)$ gives us (c.f. Lemma 9 in \cite{davenport1967two}) \[m_0 + m_1 + \cdots + m_l \ge \frac{(l+1)s}{d} \text{  and  } q_0 \ge \frac{s}{2d}.\]
The system ($2$) can be rewritten by levels as
\begin{equation*}
\begin{split}
\sum_{l = 0}^{\tau} p^l \sum_{i \in \mathcal{T}_l} c_ix_i^d \equiv 0 \pmod{p^{\tau + 1}}\text{ }\\
\sum_{l = 0}^{\tau} p^l \sum_{i \in \mathcal{T}_l} d_ix_i^d \equiv 0 \pmod{p^{\tau + 1}}.
\end{split}
\end{equation*}

\par The main idea is to use the different levels to solve the system. We provide a rough description of this process. First, we start at level zero and solve a system of congruences mod $p$, looking for a solution with a small number of non-zero variables. These non-zero variables will be multiplied by a new variable which will provide us with a new variable at a certain level $l \ge 1$. We repeatedly apply this process at level zero, and after exploiting all available variables at that level we move on to the next levels.

\par The process of using the non-zero variables of a solution to create a new variable at a higher level is referred to as a contraction. We remark that there will be different goals for a contraction. At level zero, the main goal of a contraction is to create new variables at higher levels from a non-singular solution. These new variables will be referred to as primary variables. The new variables generated after any further contractions at higher levels using at least one primary variable will also bear the name of primary variables.

\par In summary as long as a variable can be traced to a non-singular solution at level zero, this variable will be called primary. The remaining variables are called secondary and will also be contracted among themselves. The main goal is to create a primary variable at level at least $\tau + 1$. In that case, the system $(2)$ can be solved non-singularly, as such variable traces back to variables forming a non-singular solution at level zero. Moreover, since primary variables always trace back to level zero, this level has an important role in the argument. Contractions in the remaining levels follow a slightly different pattern in which secondary variables will be important both to ensure a contraction using a primary variable and to contract among themselves.

\par The next section contains some preparation for Sections \ref{main-section} and \ref{zero-sums}. The main ingredient of this paper, which allows us to deal with small values of $\tau$, is presented separetely in Section~\ref{main-section}. Different from the other results used for contracting variables, its proof is fundamentally algebraic. Section \ref{zero-sums} contains other combinatorial results needed for contractions, including Lemma \ref{alon1} which allows us to deal with the cases $\tau = 1, 2$.

\section{Some Classical Results} \label{classical}

\par This section is dedicated to stating a few classical results that will be used throughout the argument. The first one is a theorem of Cauchy rediscovered by Davenport (c.f. Theorem A in \cite{davenport1935addition}).

\begin{lemma}[The Cauchy-Davenport Theorem] \label{C-D}Let $A, B \subset \mathbb{F}_p$ and let $A + B = \{a + b : a \in A, b \in B\}$. Then \[|A+B| \ge \min\{p, |A| + |B| - 1\}.\]
\end{lemma}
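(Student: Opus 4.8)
The plan is to use the polynomial method (the Combinatorial Nullstellensatz of Alon). First I would dispose of the easy regime: if $|A| + |B| > p$, then for every $c \in \mathbb{F}_p$ the two sets $A$ and $c - B = \{c - b : b \in B\}$ have sizes summing to more than $p = |\mathbb{F}_p|$, so they intersect; hence $c \in A + B$. Thus $A + B = \mathbb{F}_p$ and the bound holds with the minimum equal to $p$. So from now on I may assume $|A| + |B| \le p$, and it suffices to prove $|A+B| \ge |A| + |B| - 1$.

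Suppose, for contradiction, that $|A+B| \le |A| + |B| - 2$. Choose a set $C \subseteq \mathbb{F}_p$ with $A + B \subseteq C$ and $|C| = |A| + |B| - 2$; note $|C| \le p - 2 < p$. Consider the polynomial $f(x,y) = \prod_{c \in C}(x + y - c) \in \mathbb{F}_p[x,y]$, of total degree $|C| = |A| + |B| - 2$. The coefficient of the monomial $x^{|A|-1} y^{|B|-1}$ in $f$ equals its coefficient in the top-degree form $(x+y)^{|A|+|B|-2}$, namely $\binom{|A|+|B|-2}{|A|-1}$; since $0 \le |A|+|B|-2 \le p - 2$, all factorials involved are products of integers strictly less than $p$, so this binomial coefficient is a nonzero element of $\mathbb{F}_p$.

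Now apply the Combinatorial Nullstellensatz with the subsets $A, B \subseteq \mathbb{F}_p$: since $\deg f = (|A|-1) + (|B|-1)$, the coefficient of $x^{|A|-1}y^{|B|-1}$ in $f$ is nonzero, $|A| > |A|-1$, and $|B| > |B|-1$, there exist $a \in A$ and $b \in B$ with $f(a,b) \ne 0$. But $a + b \in A + B \subseteq C$, so the factor $x + y - (a+b)$ of $f$ vanishes at $(a,b)$, forcing $f(a,b) = 0$ — a contradiction. Hence $|A+B| \ge |A|+|B|-1$, which together with the first paragraph gives $|A+B| \ge \min\{p, |A|+|B|-1\}$.

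This is a classical result, so there is no serious obstacle; the only point requiring care is the non-vanishing of $\binom{|A|+|B|-2}{|A|-1}$ modulo $p$, which is precisely why the reduction to the case $|A|+|B| \le p$ is performed at the outset. If one prefers to avoid the Nullstellensatz, the original route is the Dyson $e$-transform: replace $(A,B)$ by $\bigl(A \cup (B+e),\, B \cap (A-e)\bigr)$ for a well-chosen $e$, verify that $|A+B|$ does not increase while $|A|+|B|$ does not decrease, and induct on a suitable potential function; there the mild annoyance is handling the degenerate steps in which one of the transformed sets becomes empty.
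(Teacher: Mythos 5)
Your argument is correct: it is the standard Alon--Nathanson--Ruzsa proof of Cauchy--Davenport via the Combinatorial Nullstellensatz, and all the delicate points are handled properly (the reduction to $|A|+|B|\le p$ so that $\binom{|A|+|B|-2}{|A|-1}$ is a unit mod $p$, the padding of $A+B$ to a set $C$ of size exactly $|A|+|B|-2\le p-2$, and the degree bookkeeping in the Nullstellensatz application). There is no in-paper proof to compare against: the paper states this lemma as a classical result and simply cites Davenport's 1935 paper, whose original argument is of the transform/induction type you sketch in your closing remark rather than the polynomial method. The only pedantic caveat is that the statement, as written in the paper, tacitly assumes $A$ and $B$ are nonempty (your exponents $|A|-1$, $|B|-1$ need this, and the inequality itself fails for $A=\emptyset$, $|B|\ge 2$); this is a defect of the statement's phrasing, not of your proof, and is harmless in all of the paper's applications.
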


\par We now derive a few consequences from this result.

\begin{lemma}\label{C-D-1} Let $a_1, \cdots, a_{p-1}$ be integers not divisible by a prime $p$ and let $c$ be an integer. Then there exists a solution for the equation \[a_1x_1^{p-1} + \cdots + a_{p-1}x^{p-1} \equiv c \pmod{p}.\]
\end{lemma}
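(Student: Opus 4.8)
The plan is to proceed by induction on the number of variables, using the Cauchy–Davenport theorem to grow the set of representable residues one variable at a time. Since $p$ is prime, the map $x \mapsto x^{p-1}$ on $\mathbb{F}_p$ has image exactly $\{0,1\}$: by Fermat's little theorem every nonzero residue is a $(p-1)$-th root of unity, so $x^{p-1} \equiv 1$ for $x \not\equiv 0$, while $0^{p-1} \equiv 0$. Hence for any integer $a$ not divisible by $p$, the set $S_a = \{a x^{p-1} : x \in \mathbb{F}_p\}$ equals $\{0, a \bmod p\}$, a set of size exactly $2$.

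The key step is to estimate the size of the sumset $S_{a_1} + S_{a_2} + \cdots + S_{a_k}$ for $k = 1, \dots, p-1$. I would show by induction that this iterated sumset has at least $\min\{p, k+1\}$ elements. The base case $k=1$ is immediate since $|S_{a_1}| = 2$. For the inductive step, apply Lemma \ref{C-D} with $A = S_{a_1} + \cdots + S_{a_k}$ and $B = S_{a_{k+1}}$: we get
\[
|S_{a_1} + \cdots + S_{a_{k+1}}| \ge \min\{p,\ |A| + |B| - 1\} \ge \min\{p,\ \min\{p,k+1\} + 2 - 1\} = \min\{p, k+2\},
\]
which closes the induction. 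Taking $k = p-1$, the sumset $S_{a_1} + \cdots + S_{a_{p-1}}$ has at least $\min\{p, p\} = p$ elements, hence equals all of $\mathbb{F}_p$. Therefore every residue class $c$ is represented, meaning there exist $x_1, \dots, x_{p-1} \in \mathbb{F}_p$ with $a_1 x_1^{p-1} + \cdots + a_{p-1} x_{p-1}^{p-1} \equiv c \pmod{p}$, as claimed.

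There is no serious obstacle here; the only mild subtlety is making sure the $(p-1)$-th power map really does have a two-element image, which uses primality of $p$ in an essential way (for composite moduli the image can be larger or the group structure fails), and then being careful that the Cauchy–Davenport bound is being iterated with the correct bookkeeping so that exactly $p-1$ summands suffice to reach all of $\mathbb{F}_p$. One could alternatively phrase the argument without explicit induction by noting that each $S_{a_i}$ contains $\{0, a_i\}$ and invoking Cauchy–Davenport $p-2$ times, but the inductive formulation is cleanest.
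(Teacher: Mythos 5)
Your argument is correct and is essentially identical to the paper's proof: both observe that $x^{p-1}$ takes only the values $0$ and $1$ modulo $p$, so each summand contributes a two-element set $\{0, a_i\}$, and then iterate the Cauchy--Davenport theorem (Lemma \ref{C-D}) across the $p-1$ summands to cover all of $\mathbb{F}_p$. Your explicit inductive bookkeeping of the bound $\min\{p, k+1\}$ is just a more detailed writeup of the paper's ``repeatedly applying'' step.
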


\begin{proof} Observe that $x^{p-1} \in \{0, 1\} \pmod{p}$. Let $A_i = \{0, a_i\} \pmod{p}$. Then, by repeatedly applying Lemma \ref{C-D}, we obtain \[ |A_1 + \cdots + A_{p-1}| = p.\] The result follows.
\end{proof}

\begin{cor}\label{C-D-2} Let $a_1, \cdots, a_p$ be integers. Then the equation \[ a_1x_1^{p-1} + \cdots + a_px_p^{p-1} \equiv 0 \pmod{p} \] has a non-trivial solution.
\end{cor}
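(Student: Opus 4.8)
The plan is to deduce this directly from Lemma \ref{C-D-1} by a short case analysis on whether any of the coefficients is divisible by $p$. The only subtlety is book-keeping to ensure the solution we produce is genuinely non-trivial, i.e. not the all-zero vector.

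First I would dispose of the easy case: if some $a_j \equiv 0 \pmod p$, then setting $x_j = 1$ and $x_i = 0$ for all $i \neq j$ gives $a_1x_1^{p-1} + \cdots + a_px_p^{p-1} \equiv a_j \equiv 0 \pmod p$, and this is a non-trivial solution since $x_j \neq 0$. Hence we may assume from now on that none of $a_1, \dots, a_p$ is divisible by $p$.

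In that case I would single out the last coefficient $a_p$ and apply Lemma \ref{C-D-1} to the integers $a_1, \dots, a_{p-1}$ (which are not divisible by $p$) with the target value $c = -a_p$. This yields integers $x_1, \dots, x_{p-1}$ with
\[ a_1x_1^{p-1} + \cdots + a_{p-1}x_{p-1}^{p-1} \equiv -a_p \pmod p. \]
Setting $x_p = 1$ and recalling $1^{p-1} = 1$, we get $a_1x_1^{p-1} + \cdots + a_px_p^{p-1} \equiv -a_p + a_p \equiv 0 \pmod p$. Since $x_p = 1 \neq 0$, this solution is non-trivial, which completes the argument.

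There is no real obstacle here: the entire content is in Lemma \ref{C-D-1} (hence ultimately in the Cauchy--Davenport theorem), and the corollary is just the observation that adding one more variable with coefficient $a_p$ lets us absorb the required shift $-a_p$ while keeping the solution non-zero. The only point to be careful about is that one must reserve a variable (here $x_p$) to be set equal to $1$ so that the solution is not forced to be trivial; applying Lemma \ref{C-D-1} to all $p$ coefficients with $c = 0$ would not by itself rule out the all-zero solution.
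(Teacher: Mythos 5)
Your proof is correct, and it is exactly the intended derivation: the paper states this as an immediate corollary of Lemma \ref{C-D-1} without writing out the details, and your two-case argument (a vanishing coefficient gives a free non-trivial solution; otherwise reserve $x_p=1$ and hit $c=-a_p$ with the lemma) is the standard way to fill them in.
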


\par A few definitions are convenient at this point. Let $v \in \mathbb{Z}_p^r$.  The vector $v$ is said to be primitive if $p$ does not divide all its entries, i.e. if its reduction in $\mathbb{F}_p^r$ is non-zero. In that case the projective class of $v$ consists of all reductions $\lambda v$ in $\mathbb{F}_p^r$ with $\lambda$ not divisible by $p$. We then have the following result:

\begin{cor} \label{C-D-3} Let $v_i = \binom{a_i}{b_i} \in \mathbb{F}_p^2 - \{\mathbf{0}\}, i = 1, \cdots, 2p-1$. Assume that the vectors $v_{p+1}, \cdots, v_{2p-1}$ lie in the same projective class. Then the system 
\begin{equation*}
\begin{split}
a_1x_1^{p-1} + \cdots + a_{2p-1}x_{2p-1}^{p-1} \equiv 0 \pmod p \\
b_1x_1^{p-1} + \cdots + b_{2p-1}x_{2p-1}^{p-1} \equiv 0 \pmod p
\end{split}
\end{equation*} has a solution with $(x_1, \cdots, x_p) \not \equiv (0, \cdots, 0) \pmod p$.
\end{cor}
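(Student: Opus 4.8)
The plan is to use the hypothesis that $v_{p+1},\dots,v_{2p-1}$ all lie on one line through the origin in $\mathbb{F}_p^2$: their combined contribution to the system can be made to equal \emph{any} point of that line, so it suffices to choose $x_1,\dots,x_p$ (not all zero) so that the partial sum $\sum_{i\le p} x_i^{p-1}v_i$ already lands on that same line, and then cancel it with the remaining variables.

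Concretely, I would first write $v_j = c_j w$ for $j = p+1,\dots,2p-1$, where $w = \binom{w_1}{w_2}\in\mathbb{F}_p^2\setminus\{\mathbf 0\}$ is a fixed primitive representative of the common projective class and each $c_j\in\mathbb{F}_p^{\times}$ (nonzero because $v_j\neq\mathbf 0$). Pick the nonzero linear functional $\phi:\mathbb{F}_p^2\to\mathbb{F}_p$, $\phi\binom{u_1}{u_2} = w_2u_1 - w_1u_2$, whose kernel is exactly the line spanned by $w$, so $\phi(w)=0$. Set $a_i' = \phi(v_i)\in\mathbb{F}_p$ for $i=1,\dots,p$ and regard these as integers. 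Apply Corollary \ref{C-D-2} to $a_1',\dots,a_p'$: it produces $(x_1,\dots,x_p)\not\equiv(0,\dots,0)\pmod p$ with $\sum_{i=1}^{p} a_i' x_i^{p-1}\equiv 0\pmod p$. Since $x^{p-1}\equiv 0$ or $1\pmod p$ and $\phi$ is linear, this says $\phi\bigl(\sum_{i=1}^{p} x_i^{p-1} v_i\bigr)\equiv 0$, so $\sum_{i=1}^{p} x_i^{p-1} v_i$ lies in $\ker\phi$ and hence equals $-tw$ for some $t\in\mathbb{F}_p$. Finally, because $c_{p+1},\dots,c_{2p-1}$ are $p-1$ elements of $\mathbb{F}_p^{\times}$, Lemma \ref{C-D-1} supplies $x_{p+1},\dots,x_{2p-1}$ with $\sum_{j=p+1}^{2p-1} c_j x_j^{p-1}\equiv t\pmod p$, so that $\sum_{j=p+1}^{2p-1} x_j^{p-1} v_j = tw$. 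Adding the two contributions gives $\sum_{i=1}^{2p-1} x_i^{p-1} v_i\equiv\mathbf 0\pmod p$, which is precisely the asserted system, and the solution satisfies $(x_1,\dots,x_p)\not\equiv(0,\dots,0)\pmod p$.

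I do not expect a serious obstacle; the only points requiring care are the bookkeeping ones: that the common-projective-class hypothesis genuinely collapses the last $p-1$ vectors to a single scalar congruence handled by Lemma \ref{C-D-1} (which needs exactly $p-1$ nonzero coefficients, and we have exactly $p-1$), and that the auxiliary functional $\phi$ converts the ``land on the line $\langle w\rangle$'' requirement for the first block into a single homogeneous $(p-1)$-th power congruence in $p$ variables, to which Corollary \ref{C-D-2} applies verbatim.
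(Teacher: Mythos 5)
Your proof is correct and is essentially the paper's proof: the paper performs an explicit change of basis sending the common projective class to $[1:0]$ and then applies Corollary \ref{C-D-2} to the second coordinates of $v_1,\dots,v_p$ followed by Lemma \ref{C-D-1} to the last $p-1$ variables, which is exactly what your functional $\phi$ and the scalars $c_j$ accomplish in coordinate-free language. No gap; the two arguments differ only in presentation.
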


\begin{proof} Through a change of basis we can assume without loss of generality that the vectors $v_{p+1}, \cdots, v_{2p-1}$ are in the projective class $[1:0]$. In other words, $b_{p+1} = \cdots = b_{2p-1} = 0$. By the previous corollary, the equation \[b_1x_1^{p-1} + \cdots + b_{p}x_{p}^{p-1} \equiv 0 \pmod{p}\] has a non-trivial solution $(x_1, \cdots, x_p)$. Set $c = - (a_1x_1^{p-1} + \cdots + a_{p}x_{p}^{p-1})$. By Lemma \ref{C-D-1}, it follows that \[a_{p+1}x_{p+1}^{p-1} + \cdots + a_{2p-1}x_{2p-1}^{p-1} \equiv c \pmod{p}\] has a solution $(x_{p+1}, \cdots, x_{2p-1})$. Then $(x_1, \cdots, x_{2p-1})$ is a solution for the system.
\end{proof}

Lastly, we state an unrelated result concerning polynomials over $\mathbb{F}_p$:

\begin{lemma}[Davenport's Principle]\label{Davenports-Principle} Let $F \in \mathbb{F}_p[x_1, \cdots, x_n]$. Assume that $\deg_{x_k} F \le p-1$ for all $1 \le k \le n$. Moreover, assume that $F(\mathbf{a}) = 0$ for all $\mathbf{a} \in \mathbb{F}_p^n$. Then $F \equiv 0 \in \mathbb{F}_p[x_1, \cdots, x_n]$.
\end{lemma}

\begin{proof}The proof goes by induction on $n$. For $n = 1$ the result is immediate as a non-zero polynomial $F$ of degree $\deg F \le p-1$ has at most $p-1$ roots. For the general case, write \[F = x_n^{p-1}F_{p-1} + x_n^{p-2} F_{p-2}+\cdots + F_0,\]
where $F_i \in \mathbb{F}_p[x_1, \cdots, x_{n-1}]$. Notice that the equation above, seen as a polynomial in $x_n$, has $p$ roots but degree $p-1$. Therefore $F_i(\mathbf{a}_n) = 0$ for all $\mathbf{a}_n \in \mathbb{F}_p^{n-1}$. By induction, $F_i \equiv 0$ and so $F \equiv 0 \in \mathbb{F}_p[\mathbf{x}]$.
\end{proof}

\section{Two Equations Mod p} \label{main-section}

In order to solve systems of congruences mod $p$ efficiently, we prove the following result:

\begin{prop}\label{main-lemma} Consider the system
\begin{equation*}
\begin{split}
a_1x_1^{p-1} + \cdots + a_{p}x_{p}^{p-1} + a_{p+1} x_{p+1}^{p-1} + \cdots + a_{3p-3}x_{3p-3}^{p-1} \equiv 0 \pmod{p}\\
b_1x_1^{p-1} + \cdots + b_{p}x_{p}^{p-1} + b_{p+1} x_{p+1}^{p-1} + \cdots + b_{3p-3}x_{3p-3}^{p-1} \equiv 0 \pmod{p}
\end{split} \tag{$3$}
\end{equation*}
where $a_i$ and $b_i$ are not both divisible by $p$ for any $i$. Then $(3)$ has a solution with $(x_1, \cdots, x_p) \not \equiv (0, \cdots, 0) \pmod{p}$.
\end{prop}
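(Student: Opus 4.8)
\emph{Plan.} The plan is to read (3) combinatorially. Since $x^{p-1}\in\{0,1\}$ for every $x\in\mathbb{F}_p$, a solution of (3) is exactly the choice of a sub-multiset $S$ of the column vectors $v_i=\binom{a_i}{b_i}\in\mathbb{F}_p^2\setminus\{\mathbf{0}\}$ with $\sum_{i\in S}v_i\equiv\mathbf{0}\pmod p$, and the requirement $(x_1,\dots,x_p)\not\equiv\mathbf{0}$ says that $S$ must contain at least one of $v_1,\dots,v_p$. Writing $\mathcal{A}=(v_1,\dots,v_p)$ for the ``marked'' block and $\mathcal{B}=(v_{p+1},\dots,v_{3p-3})$ for the remaining $2p-3$ vectors, I must therefore prove: every configuration of $3p-3$ nonzero vectors in $\mathbb{F}_p^2$ with a distinguished block of $p$ of them has a zero-sum subsequence meeting that block. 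The statement is invariant under $\mathrm{GL}_2(\mathbb{F}_p)$ acting on the $v_i$ (a nonsingular linear change of the two forms), and I will use this freely to normalize.

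First I would dispose of the two ``concentrated'' cases using the Cauchy--Davenport corollaries. If the $p$ vectors of $\mathcal{A}$ all lie on a single line $L$ through the origin, then, reading off their coordinates with respect to a generator of $L$, Corollary~\ref{C-D-2} already produces a zero-sum subsequence of $\mathcal{A}$ itself, and we are done. If instead at least $p-1$ vectors of $\mathcal{B}$ lie on a common line $L$ through the origin, then Lemma~\ref{C-D-1} shows that \emph{every} element of $L$ is a subsequence-sum of those $p-1$ vectors; meanwhile, applying Corollary~\ref{C-D-2} to the images of $v_1,\dots,v_p$ under the quotient $\mathbb{F}_p^2\to\mathbb{F}_p^2/L$ yields a nonempty subsequence of $\mathcal{A}$ whose sum lies in $L$, and cancelling that value against a subsequence of $\mathcal{B}$ inside $L$ completes a zero-sum through $\mathcal{A}$. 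Hence from now on $\mathcal{A}$ is not contained in a line and $\mathcal{B}$ has at most $p-2$ vectors on any line through the origin.

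For the remaining ``spread-out'' case I would extract zero-sums greedily, writing the full sequence as $\mathcal{S}=\mathcal{U}\cdot\mathcal{Z}$, where $\mathcal{Z}$ is a product of pairwise disjoint minimal zero-sum subsequences and $\mathcal{U}$ is zero-sum free of \emph{maximal} length. Since the Davenport constant of $\mathbb{F}_p^2$ is $2p-1$ (Olson), $|\mathcal{U}|\le 2p-2$, so $|\mathcal{Z}|\ge p-1$ and at least one nonempty zero-sum is actually extracted. If some minimal zero-sum in $\mathcal{Z}$ meets $\mathcal{A}$, we are done; so the crux is the situation in which the whole of $\mathcal{A}$ sits inside a maximal zero-sum-free subsequence $\mathcal{U}$. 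Here one invokes the precise description of near-extremal zero-sum-free sequences over $\mathbb{F}_p^2$ (a sharp form of Olson's theorem): after a change of basis $\mathcal{U}$ consists of $p-1$ copies of $e_1$ together with $p-1$ vectors in the coset $e_2+\langle e_1\rangle$. One then tracks multiplicities: if the whole system contains $\ge p$ copies of $e_1$ those $p$ copies give a zero-sum meeting $\mathcal{A}$; otherwise all copies of $e_1$ lie in $\mathcal{U}$, and the coset part of $\mathcal{U}$, the vectors of $\mathcal{Z}$, and the surviving vectors of $\mathcal{B}$ can be recombined — again via Lemma~\ref{C-D-1} applied to a suitable projection — to route a marked vector into a zero-sum. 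The slack cases $|\mathcal{U}|\le 2p-3$ are absorbed into the extraction.

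I expect the genuine obstacle to be exactly this last configuration: forcing a zero-sum through the marked block when every marked vector is trapped in a maximal zero-sum-free subsequence. This is where the ``algebraic'' content of the proposition lives, since it rests on the exact structure of length-$(2p-2)$ zero-sum-free sequences in $\mathbb{F}_p^2$ together with a careful bookkeeping of multiplicities and cosets; the concentrated cases above, by contrast, are routine consequences of Cauchy--Davenport. Davenport's Principle (Lemma~\ref{Davenports-Principle}) is the tool that certifies, over $\mathbb{F}_p$, the polynomial identities on which this bookkeeping rests.
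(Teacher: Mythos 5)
Your combinatorial reformulation (zero-sum subsequences of $3p-3$ nonzero vectors in $\mathbb{F}_p^2$ meeting the marked block $\mathcal{A}=(v_1,\dots,v_p)$) is correct, and your treatment of the two concentrated cases via Lemma \ref{C-D-1} and Corollary \ref{C-D-2} is sound and close to the paper's own first reduction. But the main case is a plan, not a proof, and as laid out it does not go through. After the greedy extraction, the only information you retain in the bad case is that $\mathcal{A}$ lies inside a zero-sum-free subsequence $\mathcal{U}$ with $p\le|\mathcal{U}|\le 2p-2$. The structure theorem you invoke (that a zero-sum-free sequence over $\mathbb{F}_p^2$ of length $2p-2$ is, up to basis change, $e_1^{p-1}$ together with $p-1$ vectors in $e_2+\langle e_1\rangle$) applies only at the exactly extremal length; it says nothing about $\mathcal{U}$ when $|\mathcal{U}|\le 2p-3$, and those cases are not ``absorbed into the extraction'' — if $\mathcal{U}=\mathcal{A}$ has length $p$, you are left with precisely the original problem (a zero-sum-free marked block plus $2p-3$ spread-out vectors) with no new leverage. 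Moreover the extremal structure theorem is itself a deep result (Property B for $C_p\oplus C_p$), far beyond the tools the paper has on hand, and even granting it, the final ``recombination'' routing a marked vector into a zero-sum is left entirely unspecified. You acknowledge that this configuration is the genuine obstacle; it is, and it is exactly the part that remains unproved.

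For contrast, the paper resolves this case by a polynomial (Chevalley--Warning/Nullstellensatz-style) argument rather than by zero-sum structure theory. One forms the indicator polynomial $F=(1-f^{p-1})(1-g^{p-1})$ of the solution set, reduces exponents to obtain $G$ with $\deg_{x_i}G\le p-1$ and $\deg G\le 2(p-1)^2$, and shows via Davenport's Principle that if no solution has $(x_1,\dots,x_p)\not\equiv\mathbf{0}$ then $(x_1^{p-1}-1)\cdots(x_p^{p-1}-1)$ divides $G$. The algebraic core is Lemma \ref{tech-lemma}, which exhibits a monomial of top degree $2(p-1)^2$ in $F$ (supported on $x_{p+1},\dots,x_{2p-1}$ and $p-1$ further variables) with nonzero coefficient; this survives the reduction, and the divisibility then forces $\deg G\ge p(p-1)+(p-1)^2>2(p-1)^2$, a contradiction. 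Your closing remark that Davenport's Principle ``certifies the polynomial identities'' behind your bookkeeping is a misplacement: in the paper it is used precisely for this divisibility step in the polynomial method, which your outline does not employ. To salvage your route you would need an actual argument for the case where $\mathcal{A}$ is zero-sum-free, which is the whole difficulty.
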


\par In application, the first $p$ variables in this proposition will be primary variables and the remaining ones will be secondary variables. This means that a solution with at least one of the $p$ first variables being non-zero will yield a primary variable at a higher level. Proposition \ref{main-lemma} shows that in order to make a contraction creating a primary variable it is sufficient to have $p$ primary variables and $2p-3$ secondary variables at the same level. This result will be especially useful at level $\tau$, but it will also make it easier to contract as many primary variables as possible at level $\tau - 1$.

\par In order to prove Proposition \ref{main-lemma}, we will need an auxiliarly result:

\begin{lemma}\label{tech-lemma} Let $v_i = \binom{v_i(1)}{v_i(2)} \in \mathbb{F}_p^2 - \{ \mathbf{0} \}, i = 1, \cdots, 3n$ and $v_1(2) \cdots v_n(2) \ne 0$. Assume that no $n+1$ of these vectors lie in the same projective class. Then, there exist indices $\{i_1, \cdots, i_n\} \subset \{n+1, \cdots, 3n\}$ such that \[\sum_{I \in \{1, 2\}^{2n} \atop \#\{i : I(i) = 1\} = n} \prod_{i = 1}^n v_i(I(i)) \prod_{j=1}^n v_{i_j}(I(n+j)) \ne 0.\]
\end{lemma}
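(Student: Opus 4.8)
The plan is to interpret the sum in the statement as (up to a nonzero constant) the coefficient of a fixed monomial in a product of linear forms, and then use a counting/degree argument à la Davenport's Principle to force that coefficient to be nonzero for some choice of indices. Concretely, consider the polynomial in the $2n$ variables $y_1, \dots, y_n, z_1, \dots, z_n$ given by
\[
\Phi(y, z) = \prod_{i=1}^n \bigl(v_i(1) y_i + v_i(2)\bigr) \cdot \prod_{j=1}^n \bigl(v_{i_j}(1) z_j + v_{i_j}(2)\bigr).
\]
Expanding, the coefficient of $y_1 \cdots y_n$ times the sum over size-$n$ subsets of the $z_j$'s of the corresponding products is essentially the expression in the lemma; more precisely, the sum in question is the coefficient of the degree-$n$ elementary-symmetric part in the $z$'s of the coefficient of $y_1 \cdots y_n$. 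So the first step is to set up this dictionary carefully and reduce the lemma to the existence of indices $\{i_1, \dots, i_n\} \subset \{n+1, \dots, 3n\}$ making a certain explicit polynomial coefficient nonzero.

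Next I would argue by contradiction: suppose that for \emph{every} choice of $n$ indices among $\{n+1, \dots, 3n\}$ the relevant coefficient vanishes. The idea is to package all $\binom{2n}{n}$ of these vanishing conditions into a single polynomial identity and then evaluate it, or rather to build a polynomial $F$ in auxiliary variables whose vanishing on all of $\mathbb{F}_p^{\text{something}}$ would, via Lemma \ref{Davenports-Principle}, force an algebraic identity that contradicts the hypothesis that no $n+1$ of the $v_i$ lie in a common projective class. A natural candidate: introduce a variable $t$ and consider $\prod_{i=n+1}^{3n}\bigl(1 + t(v_i(1) w + v_i(2))\bigr)$-type generating functions so that choosing a subset of size $n$ corresponds to extracting the coefficient of $t^n$; the vanishing of all size-$n$ coefficients then becomes the vanishing of a product of linear forms evaluated appropriately, and the projective-class hypothesis guarantees that not too many of these linear forms can coincide, so the product cannot be forced to kill the target coefficient.

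The cleanest route is probably the following. Fix the first $n$ vectors and think of the quantity $\prod_{i=1}^n v_i(I(i))$ for $I$ ranging over sign patterns as the coefficients of a nonzero polynomial $G(w) = \prod_{i=1}^n (v_i(1) w + v_i(2))$, which has degree exactly $n$ (since some leading behavior is controlled) — actually its constant term $\prod v_i(2)$ is nonzero by hypothesis. Then $\sum_{i_1 < \dots < i_n} \prod_j (v_{i_j}(1) w + v_{i_j}(2))$ summed against $G$ in the right bilinear pairing recovers our sum as a function of nothing — wait, the indices are what we are choosing. So instead: define, for each $n$-subset $S \subset \{n+1,\dots,3n\}$, the number $N_S = \sum_{\text{size-}n\text{ patterns}} \prod_{i=1}^n v_i(I(i)) \prod_{j \in S} v_j(I)$; assume all $N_S = 0$; sum $N_S \cdot \prod_{j \notin S}(\text{something})$ over all $S$ to telescope into a statement about the full product $\prod_{i=n+1}^{3n}(v_i(1) w + v_i(2))$ and $\prod_{i=1}^n(v_i(1) w + v_i(2))$, and derive that a polynomial of degree $\le 2n$ in one variable $w$ over $\mathbb{F}_p$ vanishes identically or has too many roots, contradicting that the two product polynomials are nonzero and share no common root of high multiplicity — this is where the "no $n+1$ in one projective class" hypothesis is exactly what prevents a single linear factor from appearing with multiplicity $> n$.

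The main obstacle I expect is the bookkeeping of the middle step: getting the combinatorial identity that converts "all size-$n$ subset coefficients vanish" into a single clean polynomial statement, and then making sure the degree count genuinely beats $p$ (or genuinely produces more roots than the degree allows) so that Lemma \ref{Davenports-Principle} or the one-variable root bound applies. The projective-class hypothesis must be used precisely here — it should translate into the statement that no linear form $v_i(1) w + v_i(2)$ divides the product more than $n$ times, equivalently that the polynomial $\prod_{i=1}^{3n}(v_i(1)w + v_i(2))$ has no root of multiplicity exceeding $n$ except possibly the root "at infinity," which the condition $v_1(2)\cdots v_n(2) \neq 0$ and the choice of extracting the $t^n$ coefficient are designed to neutralize. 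Verifying that these two conditions together suffice to rule out the degenerate case is the crux; everything else is expansion of products and matching coefficients.
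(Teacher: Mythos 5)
Your opening reformulation is correct and genuinely illuminating: for a fixed choice of indices the sum in the lemma is exactly the coefficient of $w^n$ in the single-variable product $\prod_{i=1}^n\bigl(v_i(1)w+v_i(2)\bigr)\prod_{j=1}^n\bigl(v_{i_j}(1)w+v_{i_j}(2)\bigr)$. But the argument you build on top of it has a genuine gap. The ``telescoping'' step --- converting the simultaneous vanishing of all $\binom{2n}{n}$ subset coefficients into a single polynomial identity that can be refuted by root counting or by Lemma \ref{Davenports-Principle} --- is never actually produced, and the tools you name would not close it. Root counting needs a nonzero polynomial of degree $\le 2n$ with more than $2n$ roots, and nothing in your setup produces more roots than that; note also that the lemma is stated for arbitrary $n$ with no relation to $p$, so for $n$ small compared to $p$ an argument of the form ``a degree-$2n$ polynomial vanishes on too much of $\mathbb{F}_p$'' has no chance of being the right mechanism. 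Likewise Lemma \ref{Davenports-Principle} requires a function vanishing at \emph{every} point of some $\mathbb{F}_p^k$ with all partial degrees at most $p-1$, and you have not exhibited such a function. Your intended use of the projective-class hypothesis (``no linear factor divides the full product more than $n$ times'') is a true consequence of the hypothesis, but you give no route from it to the nonvanishing of a specific coefficient, and I do not see one along the lines you sketch.

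The fix is much simpler than what you are attempting, and your own reformulation makes it transparent: build the index set greedily, one index at a time. If $Q_k=\prod_{i=1}^n L_i\prod_{j=1}^k L_{i_j}$ has $[w^k]Q_k=c_1\ne 0$ and $[w^{k+1}]Q_k=c_2$, then for any candidate next factor $L_{i_{k+1}}$ one has $[w^{k+1}]\bigl(Q_k\,L_{i_{k+1}}\bigr)=v_{i_{k+1}}(1)\,c_1+v_{i_{k+1}}(2)\,c_2$, which vanishes only when $v_{i_{k+1}}$ lies in the single projective class determined by $(c_1,c_2)$ (here $c_1\ne0$ is what makes this a genuine line). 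Since $2n-k\ge n+1$ candidates remain and no $n+1$ of the vectors share a projective class, some candidate works; the base case $k=0$ is exactly the hypothesis $v_1(2)\cdots v_n(2)\ne 0$. This is precisely the paper's induction, written in the coefficient language you set up. As it stands, your proposal identifies the right object but replaces the one step where the projective-class hypothesis must be used pointwise (once per added index, via pigeonhole) with a global contradiction scheme that is not carried out and, in my judgment, cannot be carried out with the tools you cite.
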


\begin{proof} We show by induction that for every $0 \le k \le n$ there exists a set $\{i_1, \cdots, i_k\} \subset \{n+1, \cdots, 3n\}$ such that \[\sum_{I \subset \{1, 2\}^{n+k} \atop \#\{i : I(i) = 1\} = k} \prod_{i = 1}^n v_i(I(i)) \prod_{j=1}^k v_{i_j}(I(n+j)) \ne 0.\]
\par For $k = 0$ this is obvious since the expression above equals $v_1(2) \cdots v_n(2) \ne 0$. Now assume that the hypothesis is true for $k \le n-1$. Then \[\sum_{I \in \{1, 2\}^{n+k+1} \atop \#\{i : I(i) = 1\} = k+1} \prod_{i = 1}^n v_i(I(i)) \prod_{j=1}^{k+1} v_{i_j}(I(n+j)) = v_{i_{k+1}}(1) c_1 + v_{i_{k+1}}(2) c_2, \ \text{where }\]
\[c_1 = \sum_{I \in \{1, 2\}^{n+k} \atop \#\{i : I(i) = 1\} = k} \prod_{i = 1}^n v_i(I(i)) \prod_{j=1}^k v_{i_j}(I(n+j)) \text{ and }\] \[c_2 = \sum_{I \in \{1, 2\}^{n+k} \atop \#\{i : I(i) = 1\} = k+1} \prod_{i = 1}^n v_i(I(i)) \prod_{j=1}^k v_{i_j}(I(n+j)).\]

\par By induction we can choose $\{i_1, \cdots, i_k\} \subset \{n+1, \cdots, 3n\}$ such that $c_1 \ne 0$. Therefore $v_{i_{k+1}}(1) c_1 + v_{i_{k+1}}(2) c_2 = 0$ if and only if $v_{i_{k+1}}$ lies in a specific projective class. However there are $2n-k \ge n+1$ possible choices for $v_{i_{k+1}}$ and so not all of them lie in the same projective class. Hence there is a choice of $v_{i_{k+1}}$ such that the expression above is not zero, as desired. 
\end{proof}

We are now ready to prove Proposition \ref{main-lemma}.

\begin{proof}[Proof of Proposition \ref{main-lemma}] Let $v_i = \binom{a_i}{b_i} \in \mathbb{F}_p^2 - \{\mathbf{0}\}, i = 1, \cdots, 3p-3$. Suppose first that $p$ of these vectors, say $v_{i_1}, \cdots, v_{i_p}$, lie in the same projective class. Through a change of basis it can be assumed that these elements lie in the class $[1:0]$. In the case $i_j \le p$ for some $j$ it is enough to solve the system
\[a_{i_1}x_{i_1}^{p-1} + \cdots + a_{i_p}x_{i_p}^{p-1} \equiv 0 \pmod{p}\]
with $x_{i_j} \not \equiv 0 \pmod{p}$. Such a solution follows directly from Lemma \ref{C-D-1}. On the other hand, if $i_j \ge p + 1$ for all $j$, Corollary \ref{C-D-3} guarantees a solution satisfying $(x_1, \cdots, x_p) \not \equiv (0, \cdots, 0) \pmod{p}$. Thus it can be assumed that no $p$ vectors $v_i$ lie in the same projective class.

\par Let $\mathbf{x} = (x_1, \cdots, x_{3p-3})$. Define the polynomials 
\[f(\mathbf{x}) = a_1x_1^{p-1} + \cdots + a_{3p-3}x_{3p-3}^{p-1}, \ g(\mathbf{x}) = b_1x_1^{p-1} + \cdots + b_{3p-3}x_{3p-3}^{p-1} \in \mathbb{F}_p[\mathbf{x}].\]
Let $F(\mathbf{x}) = (1 - f(\mathbf{x})^{p-1})(1-g(\mathbf{x})^{p-1})$ be the indicator function of a solution for the system $(3)$. Consider the reduction of monomials given by
\[x_1^{r_1}\cdots x_{3p-3}^{r_{3p-3}} \mapsto x_1^{t(r_1)} \cdots x_{3p-3}^{t(r_{3p-3})}, \text{ where}\]
\[t(r) = \begin{cases} r \pmod{p-1} \in \{1, \cdots, p-2\} \text{ if } p-1 \nmid r, \\ p-1 \text{ if } p-1 \mid r \text{ and } r \ge p-1, \\ 0 \text{ otherwise.}\end{cases}\]
Observe that $t(r) \in \{0, 1, \cdots, p-1\}$ and $x^r = x^{t(r)}$ for every $x \in \mathbb{F}_p$. Therefore this transformation does not change a monomial as a function even though it reduces its degree.

\par Let $G(\mathbf{x})$ be the polynomial obtained by summing up the reduced monomials of $F(\mathbf{x})$. Note that $G(\mathbf{x})$ is still the indicator function of a solution for the system $(3)$ and $\deg G \le \deg F \le 2(p-1)^2$. Moreover $\deg_{x_i} G \le p-1$ for all $i$. Suppose that the system $(3)$ has no solution with $(x_1, \cdots, x_p) \ne (0, \cdots, 0)$. Let $1 \le k \le p$, define $\mathbf{x}_k = (x_1, \cdots, x_{k-1}, x_{k+1}, \cdots, x_{3p-3})$. Let $0 \ne c \in \mathbb{F}_p$. Write \[G(\mathbf{x}) = (x_k - c) Q(\mathbf{x}) + R(\mathbf{x}_k).\]

\par Since the system $(3)$ has no solutions with $x_k = c$ it follows that $R(\mathbf{x}_k) = 0$ for all $\mathbf{x}_k \in \mathbb{F}_p^{3p-4}$. However $\deg_{x_i} R \le p-1$ for all $i \ne k$. Lemma \ref{Davenports-Principle} implies that $R \equiv 0 \in \mathbb{F}_p[\mathbf{x}_k]$ and so $x_k - c$ divides $G(\mathbf{x})$. Finally, notice that $\mathbb{F}_p[\mathbf{x}]$ is a UFD and all the factors $x_k - c$ are distinct. Therefore there is a polynomial $H(\mathbf{x}) \in \mathbb{F}_p[\mathbf{x}]$ such that
\[G(\mathbf{x}) = (x_1^{p-1} - 1) \cdots (x_p^{p-1} - 1) H(\mathbf{x}). \tag{$4$}\]

\par We now find a contradiction using Lemma \ref{tech-lemma}. Notice that there exists $\lambda$ such that $b_i + \lambda a_i \ne 0$ for all $p+1 \le i \le 2p-1$. Therefore through a change of basis it can be assumed that the vectors $v_{p+1}, \cdots, v_{2p-1}$ do not lie in the projective class $[1: 0]$. In particular, $b_{p+1} \cdots b_{2p-1} \ne 0$, and no $p$ vectors lie in the same projective class. Thus Lemma \ref{tech-lemma} provides us with indices $\{i_1, \cdots, i_{p-1}\} \subset \{1, \cdots, p, 2p, \cdots, 3p-3\}$ such that
\[\sum_{I \subset \{1, 2\}^{2p-2} \atop \#\{i : I(i) = 1\} = p-1} \prod_{i = 1}^{p-1} v_{i+p}(I(i)) \prod_{j=1}^{p-1} v_{i_j}(I(p+j-1)) \ne 0.\]
However, this is exactly the coefficient of $x_{p+1}^{p-1} \cdots x_{2p-1}^{p-1}x_{i_1}^{p-1} \cdots x_{i_{p-1}}^{p-1}$ in the expansion of $F(\mathbf{x})$. Notice that this monomial has no variable with degree greater than $p-1$ and so it is preserved under the reduction process. Moreover this monomial has degree $2(p-1)^2 = \deg F$ and so it cannot be canceled by any other monomial after the reduction. In particular, it also appears in the expansion of $G(\mathbf{x})$.

\par By combining this with $(4)$ notice that $H(\mathbf{x})$ must contain a monomial multiple of $x_{p+1}^{p-1} \cdots x_{2p-1}^{p-1}$. Therefore $\deg H \ge (p-1)^2$ and so $\deg G \ge p(p-1) + (p-1)^2 > 2(p-1)^2$ contradicting $\deg G \le \deg F$. The proposition follows. 
\end{proof}

\section{Zero-Sum Sequences}\label{zero-sums}

\par This section lists the combinatorial results used in contracting variables. The contractions will involve at most $p$ variables at a time although a certain number of initial variables is necessary to guarantee the existence of such contractions. The first result is due to Olson (c.f. Lemma 1.1 in \cite{olson1969combinatorial}):

\begin{lemma}\label{olson} Let $v_1, \cdots, v_{3p-2} \in \mathbb{F}_p^2$. Then there exist distict indices $i_1, \cdots, i_t$ with $1 \le t \le p$ such that\[v_{i_1} + \cdots + v_{i_t} = \mathbf{0}.\] 
\end{lemma}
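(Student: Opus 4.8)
The plan is to deduce Olson's lemma from the Chevalley--Warning theorem applied to a carefully chosen system of polynomial congruences over $\mathbb{F}_p$. Write each vector as $v_i = \binom{a_i}{b_i}$ with $a_i, b_i \in \mathbb{F}_p$. We want to select a nonempty subset $S \subseteq \{1, \dots, 3p-2\}$ of size at most $p$ with $\sum_{i \in S} v_i = \mathbf 0$. Introduce variables $x_1, \dots, x_{3p-2}$ and, exploiting that $x^{p-1}$ is the indicator of $x \neq 0$, consider the three congruences
\[
\sum_{i=1}^{3p-2} a_i x_i^{p-1} \equiv 0, \qquad \sum_{i=1}^{3p-2} b_i x_i^{p-1} \equiv 0, \qquad \sum_{i=1}^{3p-2} x_i^{p-1} \equiv 0 \pmod p.
\]
A nonzero solution $\mathbf x$ corresponds to the set $S = \{ i : x_i \neq 0\}$, for which $\sum_{i\in S} v_i = \mathbf 0$ and (from the third congruence) $|S| \equiv 0 \pmod p$; combined with $1 \le |S| \le 3p-2$ this forces $|S| \in \{p, 2p\}$. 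The total degree of the three forms is $3(p-1) = 3p - 3 < 3p - 2$, so by Chevalley--Warning the number of solutions is divisible by $p$; since $\mathbf 0$ is a solution, there is a nonzero one, giving a subset of size exactly $p$ or $2p$ summing to $\mathbf 0$.

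If $|S| = p$ we are done with $t = p$. The remaining case $|S| = 2p$ requires an extra step: I would show that a zero-sum set of size $2p$ can be split. Restrict attention to the $2p$ vectors indexed by $S$; by a counting/pigeonhole argument on projective classes, either many of them lie in a single projective class in $\mathbb{P}^1(\mathbb F_p)$ — in which case one extracts a zero-sum subset of size at most $p$ directly from that class using the one-dimensional Cauchy--Davenport-type argument (as in Lemma~\ref{C-D-1}) — or one reruns Chevalley--Warning on the $2p$-element subsystem with the size-constraint form replaced by $\sum x_i^{p-1} \equiv 0$, where now degree $3p-3 < 2p$ fails for small $p$, so instead one drops to two forms $\sum_{i\in S} a_i x_i^{p-1} \equiv 0$, $\sum_{i\in S} b_i x_i^{p-1} \equiv 0$ of total degree $2p - 2 < 2p$ and obtains a nonzero solution whose support $S' \subsetneq S$ has $\sum_{i \in S'} v_i = \mathbf 0$; if $1 \le |S'| \le p$ we finish, otherwise $|S'| = p+1, \dots, 2p-1$ and we recurse on the complement $S \setminus S'$, which is also zero-sum and strictly smaller, until a block of size $\le p$ appears.

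The main obstacle is the bookkeeping in this last case: making the descent argument actually terminate at a subset of size at most $p$ rather than oscillating, and handling the degenerate configurations (e.g. when the support of every Chevalley--Warning solution is too large). One clean way around this is to invoke Olson's original group-ring argument instead, but staying within the tools of this excerpt, the cleanest route is probably to first prove the ``$\mathbb F_p$-version'' that among any $2p - 1$ elements of $\mathbb F_p$ some nonempty subset of size $\le p$ sums to zero (immediate from Cauchy--Davenport on the sets $\{0, v_i\}$ stopping once we reach $0$), and then bootstrap to $\mathbb F_p^2$ by the two-form Chevalley--Warning with $3p - 2$ variables exactly as above, observing that $3p - 2 > 3(p-1)$ already forces a nonzero solution whose support, if it has size $> p$, contains a proper zero-sum sub-support by a second application to the restricted system — iterating at most twice since sizes lie in $p\mathbb Z$. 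I expect the final write-up to require care to confirm that at each stage the relevant total degree is strictly less than the number of variables in play.
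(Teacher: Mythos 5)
The paper does not actually prove this lemma; it is quoted from Olson's paper, where the proof is a group-ring computation. So your attempt at a self-contained elementary proof is doing more than the text, and the Chevalley--Warning setup is a viable route. The first stage is correct: three forms of total degree $3p-3$ in $3p-2$ variables have a nontrivial common zero, whose support $S$ is zero-sum with $|S|\in\{p,2p\}$. The genuine gap is exactly the one you flag but do not close: in the case $|S|=2p$ you assert that Chevalley--Warning applied to the two forms restricted to $S$ yields a nonzero solution with support $S'\subsetneq S$. It does not. Since $S$ itself is zero-sum, \emph{every} point of $(\mathbb{F}_p^{\times})^{2p}$ is already a solution of the restricted system, so ``a nonzero solution'' may perfectly well have full support and give no information, and your descent never gets started.

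The missing step is a short count. The number of common zeros of the two restricted forms is $\sum_{T}(p-1)^{|T|}\equiv\sum_{T}(-1)^{|T|}\pmod p$, the sum running over zero-sum subsets $T\subseteq S$, and Chevalley--Warning makes this $\equiv 0\pmod p$. The empty set and $T=S$ each contribute $+1$, so for $p$ odd the total would be $2\not\equiv 0$ unless some zero-sum $T$ with $1\le|T|\le 2p-1$ exists. Then either $T$ or its complement $S\setminus T$ (also nonempty and zero-sum) has size at most $p$, so a single step suffices and your worry about the recursion ``oscillating'' disappears. With that paragraph inserted the proof is complete; the projective-class/Cauchy--Davenport fork in your second paragraph and the ``bootstrap'' in your last one then become unnecessary.
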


This result will be used to contract primary variables among themselves. Moreover this result has other consequences that will be used in contractions of secondary variables. The first consequence is described in the following lemma:

\begin{lemma}\label{olson1} Let $c_1, c_2, \cdots, c_{3p-2}$ be $p$-adic integers not divisible by $p$. Then there exist distinct indices $i_1, \cdots, i_t$ with $1 \le t \le p$ such that $$\ord \text{}_p (c_{i_1} + \cdots + c_{i_t}) = 1.$$
\end{lemma}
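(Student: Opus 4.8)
\textbf{Proof plan for Lemma \ref{olson1}.}

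The plan is to lift the purely vectorial statement of Lemma \ref{olson} into a statement about $p$-adic valuations by encoding each $c_i$ as a vector in $\mathbb{F}_p^2$ whose first coordinate records the reduction of $c_i$ modulo $p$ and whose second coordinate records the reduction modulo $p$ of $(c_i - \tilde c_i)/p$ for a suitable fixed integer representative $\tilde c_i$ of $c_i \bmod p^2$. More precisely, first I would reduce to the case where the $c_i$ are ordinary integers: since only $\ord_p$ of a short sum of the $c_i$ is at stake and the conclusion is about $\ord_p = 1$ exactly, nothing changes if we replace each $c_i$ by an integer congruent to it mod $p^2$. Write $c_i \equiv u_i + p w_i \pmod{p^2}$ with $u_i \in \{1,\dots,p-1\}$ (nonzero since $p \nmid c_i$) and $w_i \in \{0,\dots,p-1\}$, and set $v_i = \binom{u_i}{w_i} \in \mathbb{F}_p^2$.

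Next I would apply Lemma \ref{olson} to the family $v_1,\dots,v_{3p-2}$ to obtain distinct indices $i_1,\dots,i_t$ with $1 \le t \le p$ and $v_{i_1} + \cdots + v_{i_t} = \mathbf{0}$ in $\mathbb{F}_p^2$. The first-coordinate relation gives $u_{i_1} + \cdots + u_{i_t} \equiv 0 \pmod p$, hence $\ord_p(c_{i_1} + \cdots + c_{i_t}) \ge 1$. To pin the valuation down to exactly $1$, write $c_{i_1} + \cdots + c_{i_t} = (u_{i_1} + \cdots + u_{i_t}) + p(w_{i_1} + \cdots + w_{i_t}) + O(p^2)$; the term $u_{i_1} + \cdots + u_{i_t}$ is a sum of $t \le p$ integers each in $\{1,\dots,p-1\}$ that is divisible by $p$, so it equals $p \cdot e$ for some integer $e$ with $1 \le e \le p-1$ — in particular $e \not\equiv 0 \pmod p$. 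Dividing by $p$, the coefficient of $p^0$ in $(c_{i_1} + \cdots + c_{i_t})/p$ is $e + (w_{i_1} + \cdots + w_{i_t}) \bmod p$, and the second-coordinate relation from Lemma \ref{olson} says $w_{i_1} + \cdots + w_{i_t} \equiv 0 \pmod p$. Hence $(c_{i_1} + \cdots + c_{i_t})/p \equiv e \not\equiv 0 \pmod p$, so $\ord_p(c_{i_1} + \cdots + c_{i_t}) = 1$ exactly, which is the claim.

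The one genuinely delicate point — the "main obstacle" — is the bookkeeping that guarantees the integer $e = (u_{i_1} + \cdots + u_{i_t})/p$ is not itself divisible by $p$: this is exactly where the restriction $t \le p$ is used, since $0 < u_{i_1} + \cdots + u_{i_t} \le t(p-1) \le p(p-1) < p^2$, forcing $0 < e < p$. If one had no control on $t$ this step would fail, and it is the reason the valuation lands on $1$ rather than merely on $\ge 1$. Everything else is routine: the reduction to integers is a standard density argument, and the two-coordinate encoding is precisely tailored so that a single application of Lemma \ref{olson} delivers both the "$\ge 1$" and the "$\le 1$" halves simultaneously.
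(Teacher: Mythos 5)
Your proposal is correct and is essentially identical to the paper's proof: the same encoding $c_i \equiv a_i + pb_i \pmod{p^2}$ into vectors of $\mathbb{F}_p^2$, the same application of Lemma \ref{olson}, and the same use of the bound $0 < a_{i_1} + \cdots + a_{i_t} \le p(p-1) < p^2$ to force the valuation to be exactly $1$.
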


\begin{proof} For each $i$ write $c_i \equiv a_i + pb_i \pmod {p^2}$ with $a_i \in \{1, 2, \cdots, p-1\}$ and $b_i \in \{0, 1, \cdots, p-1\}$. Let $v_i = \binom{a_i}{b_i}$. By Lemma \ref{olson} there exist distinct indices $i_1, \cdots, i_t$ with $1 \le t \le p$ such that $v_{i_1} + \cdots + v_{i_t} \equiv \binom{0}{0} \pmod p$. Hence the number
\[c_{i_1} + \cdots + c_{i_t} \equiv a_{i_1} + \cdots + a_{i_t} + p(b_{i_1} + \cdots + b_{i_t}) \equiv a_{i_1} + \cdots + a_{i_t} \pmod {p^2}\] is a multiple of $p$. However $1 \le a_{i_1} + \cdots + a_{i_t} \le  p(p-1)$ and so $\text{ord}_p(c_{i_1} + \cdots + c_{i_t}) = 1$. 
\end{proof}

\begin{cor}\label{olson2} Let $v_1, \cdots, v_{3p-2} \in \mathbb{Z}_p^2$ be primitive vectors in the same projective class. Then there exist distinct indices $i_1, \cdots, i_t$ with $1 \le t \le p$ such that both entries of $v_{i_1} + \cdots + v_{i_t}$ are multiples of $p$ but at least one is not a multiple of $p^2$.
\end{cor}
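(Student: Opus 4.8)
The plan is to reduce, by a change of basis, to the case where the common projective class is $[1:0]$, and then feed the first coordinates into Lemma \ref{olson1}. The key preliminary observation I would make is that the conclusion to be proved is invariant under the action of $\mathrm{GL}_2(\mathbb{Z}_p)$ on the $v_i$: a vector $w \in \mathbb{Z}_p^2$ has both entries divisible by $p$ with at least one not divisible by $p^2$ precisely when $w = p w'$ for some primitive $w' \in \mathbb{Z}_p^2$, i.e. when $w \in p\mathbb{Z}_p^2 \setminus p^2\mathbb{Z}_p^2$. This condition, as well as the primitivity of the $v_i$ and the hypothesis that they all lie in one projective class, is preserved by any $M \in \mathrm{GL}_2(\mathbb{Z}_p)$ (equivalently, one may take $M \in \mathrm{GL}_2(\mathbb{Z})$ whose reduction mod $p$ carries the given class to $[1:0]$), so the reduction is legitimate.

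After such a change of basis I may assume the common class is $[1:0]$, which means each $v_i$ has the shape $v_i = \binom{c_i}{p e_i}$ with $c_i \in \mathbb{Z}_p$ a unit and $e_i \in \mathbb{Z}_p$. I would then apply Lemma \ref{olson1} to the $p$-adic integers $c_1, \dots, c_{3p-2}$, none of which is divisible by $p$, obtaining distinct indices $i_1, \dots, i_t$ with $1 \le t \le p$ and $\ord_p(c_{i_1} + \cdots + c_{i_t}) = 1$. Since
\[ v_{i_1} + \cdots + v_{i_t} = \binom{c_{i_1} + \cdots + c_{i_t}}{\,p(e_{i_1} + \cdots + e_{i_t})\,}, \]
the first entry is divisible by $p$ but not by $p^2$ and the second entry is divisible by $p$, which is exactly the assertion of the corollary.

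The hard part here is essentially only the setup: one has to notice that the statement is a basis-independent condition on the partial sum (namely, that it lies in $p\mathbb{Z}_p^2 \setminus p^2\mathbb{Z}_p^2$), which is what makes the passage to the coordinates $[1:0]$ valid. Once that is in place there is no real obstacle, as Lemma \ref{olson1} supplies the required index set directly.
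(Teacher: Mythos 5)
Your proof is correct and is exactly the argument the paper has in mind: the paper simply declares Corollary~\ref{olson2} ``immediate'' from Lemma~\ref{olson1}, and your write-up supplies the (valid) change-of-basis reduction to the class $[1:0]$ together with the observation that the target condition $w \in p\mathbb{Z}_p^2 \setminus p^2\mathbb{Z}_p^2$ is $\mathrm{GL}_2(\mathbb{Z}_p)$-invariant. Nothing to add.
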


\begin{cor}\label{olson3} Let $v_1, \cdots, v_{3p^2-2} \in \mathbb{Z}_p^2$ be primitive vectors. Then there exist distinct indices $i_1, \cdots, i_t$ with $1\le t \le p$ such that both entries of $v_{i_1} + \cdots + v_{i_t}$ are multiples of $p$ but at least one is not a multiple of $p^2$.
\end{cor}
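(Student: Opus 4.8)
The plan is to reduce to Corollary \ref{olson2} via a pigeonhole argument on projective classes. First, reduce the primitive vectors $v_1, \dots, v_{3p^2-2}$ modulo $p$; each then determines a point of $\mathbb{P}^1(\mathbb{F}_p)$, of which there are exactly $p+1$. If every projective class contained at most $3p-3$ of the $v_i$, the total number of vectors would be at most $(p+1)(3p-3) = 3(p^2-1) = 3p^2 - 3 < 3p^2 - 2$, a contradiction. Hence some projective class must contain at least $3p-2$ of the vectors $v_i$.

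Next, I would restrict attention to a subset of exactly $3p-2$ indices whose corresponding vectors all lie in this common projective class. These vectors are still primitive (being among the original $v_i$) and lie in the same projective class, so Corollary \ref{olson2} applies to them directly: it produces distinct indices $i_1, \dots, i_t$ drawn from this subset, with $1 \le t \le p$, such that both entries of $v_{i_1} + \cdots + v_{i_t}$ are divisible by $p$ while at least one is not divisible by $p^2$. Since these indices are in particular distinct as elements of $\{1, \dots, 3p^2-2\}$, this is exactly the desired conclusion.

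There is no substantive obstacle here; the only thing worth noting is that the hypothesis $3p^2-2$ is sharp for this argument, because $(p+1)(3p-3) = 3p^2-3$, so having just one vector more than $3p^2-3$ forces a projective class of size at least $3p-2$, which is precisely the count needed to invoke Corollary \ref{olson2}.
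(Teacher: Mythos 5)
Your proposal is correct and matches the paper's proof: the paper likewise pigeonholes the $3p^2-2$ vectors into the $p+1$ projective classes of $\mathbb{P}^1(\mathbb{F}_p)$ to find $\lceil \frac{3p^2-2}{p+1}\rceil = 3p-2$ vectors in a common class and then applies Corollary \ref{olson2}. Your remark on the sharpness of the count $3p^2-2$ is a nice addition but not needed.
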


\begin{proof} Corollary \ref{olson2} is immediate. Corollary \ref{olson3} follows from Corollary \ref{olson2} and the observation that there will be $\lceil \frac{3p^2-2}{p+1} \rceil = 3p-2$ vectors in the same projective class. 
\end{proof}

Corollary \ref{olson3} provides a quadratic bound on the minimum number of variables needed to make a contraction from level $l$ to level $l+1$. However, for the last two parts of Theorem \ref{main}, it is convenient to have a linear bound instead, and that will be derived from the following combinatorial result (c.f. Theorem 1 in \cite{alon1995lattice} and Equation 1.4 in \cite{chintamani2012new}):

\begin{lemma} \label{alon} There exists a constant $C_0$ such that for all $p$, every sequence of at least $C_0p$ elements of $\mathbb{F}_p^3$ contains a zero-sum subsequence of length $p$.
\end{lemma}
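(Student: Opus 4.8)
The plan is to read the statement directly off the cited references; the proof is essentially a change of notation. Alon and Dubiner prove that there is an absolute constant $c$ such that, for every positive integer $n$, every sequence of at least $c\,n$ elements of the group $(\mathbb{Z}/n\mathbb{Z})^3$ contains a subsequence of length \emph{exactly} $n$ whose sum is $0$. Specializing to $n = p$ a prime --- so that $(\mathbb{Z}/p\mathbb{Z})^3 = \mathbb{F}_p^3$ --- and taking $C_0 = c$ gives the lemma verbatim.

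Concretely, first I would quote the precise form of Theorem 1 in \cite{alon1995lattice} and confirm that its conclusion is the length-exactly-$n$ statement needed here, so that no truncation or padding step intervenes in passing to the formulation above. Next, since the numerical thresholds in parts (ii) and (iii) of Theorem \ref{main} depend on the constant $C$ of Lemma \ref{alon1}, which is in turn assembled from $C_0$, I would make $C_0$ explicit: the refined linear bound for the dimension-three Erd\H{o}s--Ginzburg--Ziv constant recorded in Equation (1.4) of \cite{chintamani2012new} fixes an admissible numerical value for the coefficient of $p$, and reading it off yields a concrete $C_0$ valid for every prime $p$.

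At this level there is no genuine obstacle: the entire combinatorial content of the lemma lives in \cite{alon1995lattice} and \cite{chintamani2012new}. The only care needed is bookkeeping --- checking that the cited bound holds for \emph{all} primes (in particular the small cases $p = 5, 7$ that occur in Theorem \ref{main}), not just asymptotically, and that ``zero-sum subsequence of length $p$'' in the sources matches the usage here rather than ``length a positive multiple of $p$'' or ``length at most $p$.'' The real work comes immediately afterward, in deriving the usable contraction tool Lemma \ref{alon1} from this lemma.
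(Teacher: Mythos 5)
Your proposal matches the paper exactly: Lemma \ref{alon} is stated there as a direct citation of Theorem 1 in \cite{alon1995lattice} (specialized to $n = p$ prime), with the explicit admissible value $C_0 \le 9994$ read off from Equation (1.4) of \cite{chintamani2012new}, and no further argument is given or needed. Your cautionary checks (length exactly $p$ rather than a multiple of $p$, and validity for all primes including small ones) are the right bookkeeping points and are consistent with how the paper uses the result.
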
 
Define $C = C_0 + 2$. Although it is conjectured that $C_0 \le 9$, the estimates in \cite{chintamani2012new} only show $C_0 \le 9994$, i.e. $C \le 9996$. The following result is a simple consequence of the previous lemma and it plays an important role in Section \ref{secondary-tau}.

\begin{lemma}\label{alon1} Let $v_1,\cdots, v_{Cp} \in \mathbb{Z}_p^2$ be primitive vectors. Then there exist distinct indices $i_1, \cdots, i_t$ with $1 \le t \le p$ such that the entries of $v_{i_1} + \cdots + v_{i_t}$ are both multiples of $p$ but at least one is not a multiple of $p^2$.
\end{lemma}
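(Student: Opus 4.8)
The plan is to mimic the derivation of Corollary \ref{olson3} from Corollary \ref{olson2}, but replacing the pigeonhole-into-projective-classes argument with Lemma \ref{alon}. First I would reduce mod $p^2$: write each primitive vector as $v_i \equiv w_i \pmod{p^2}$ with $w_i \in \{0,1,\dots,p^2-1\}^2$, so that $v_i$ primitive means $p \nmid w_i$. The goal is to find distinct $i_1,\dots,i_t$ with $1 \le t \le p$ such that $w_{i_1} + \cdots + w_{i_t}$ has both coordinates divisible by $p$ but at least one not divisible by $p^2$; this is exactly the claimed conclusion.

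To apply Lemma \ref{alon}, I would associate to each $v_i$ an element of $\mathbb{F}_p^3$. The natural choice is $u_i = (\bar a_i, \bar b_i, 1) \in \mathbb{F}_p^3$, where $\bar a_i, \bar b_i$ are the reductions mod $p$ of the two coordinates of $v_i$. Since we have $Cp = (C_0+2)p \ge C_0 p$ vectors, Lemma \ref{alon} furnishes indices $i_1,\dots,i_p$ (a zero-sum subsequence of length \emph{exactly} $p$) with $u_{i_1} + \cdots + u_{i_p} = \mathbf{0}$ in $\mathbb{F}_p^3$. The third coordinate being zero forces $t = p$ is consistent (the sum of $p$ ones is $0$ in $\mathbb{F}_p$), and the first two coordinates being zero say precisely that both entries of $v_{i_1} + \cdots + v_{i_p}$ are divisible by $p$. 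Writing $S = v_{i_1} + \cdots + v_{i_p}$, we then have $S \equiv p\binom{e_1}{e_2} \pmod{p^2}$ for some $e_1, e_2 \in \{0,1,\dots,p-1\}$, and it remains to rule out $e_1 = e_2 = 0$.

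The main obstacle is exactly this last point: ensuring at least one entry of the sum is not a multiple of $p^2$. Unlike in Lemma \ref{olson1}, where the sum of the ``units part'' lies strictly between $1$ and $p(p-1)$, here a zero-sum subsequence of length $p$ could in principle sum to $\mathbf{0}$ mod $p^2$ as well. To handle this I would argue as follows. If $S \equiv \mathbf{0} \pmod{p^2}$, then I would instead exhibit a \emph{proper} sub-sum that works. Concretely, among the $p$ chosen vectors $v_{i_1}, \dots, v_{i_p}$, consider a maximal index $r < p$ for which the partial sum $v_{i_1} + \cdots + v_{i_r}$ still has some entry not divisible by $p^2$ (such $r$ exists, e.g. $r=1$ works since $v_{i_1}$ is primitive hence not divisible by $p$, let alone $p^2$). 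If for some $1 \le r \le p$ the partial sum $v_{i_1} + \cdots + v_{i_r}$ has both entries divisible by $p$ while some entry is a non-multiple of $p^2$, we are done with $t = r$. Otherwise, running through $r = 1, \dots, p$ and tracking the pair $(\ord_p(\text{first entry}) \bmod \{0,1,\ge 2\}, \ord_p(\text{second entry}) \bmod \{0,1,\ge 2\})$ of the partial sums, one sees that the full sum $S$ cannot be $\equiv \mathbf{0} \pmod {p^2}$ without some partial sum already achieving the goal; alternatively, I would apply Lemma \ref{alon} twice, once to a sub-collection, to force a genuinely non-trivial contribution.

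Actually a cleaner route for the obstacle: apply Lemma \ref{alon} with a slightly perturbed third coordinate. Replace $u_i = (\bar a_i, \bar b_i, 1)$ by $u_i' = (\bar a_i, \bar b_i, c_i)$ where $c_i \in \mathbb{F}_p$ records the mod-$p$ reduction of $\lfloor a_i/p\rfloor$ (or some fixed lift datum), chosen so that a zero-sum of length $p$ now additionally controls one more congruence mod $p^2$. Since $Cp$ is large and $C = C_0 + 2$ gives room, one can peel off vectors whose mod-$p$ class is repeated and arrange, via Lemma \ref{olson1} applied coordinatewise to the residual ``carry'' data, that the resulting length-$p$ zero-sum has $\ord_p$ of at least one coordinate of $v_{i_1} + \cdots + v_{i_p}$ equal to exactly $1$. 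I expect the write-up to choose whichever of these two bookkeeping arguments is shortest; the essential content is that Lemma \ref{alon} gives the divisibility by $p$ in both coordinates with a length-$p$ subsequence, and the non-divisibility by $p^2$ is then a short pigeonhole/ordering argument exactly as in the proof of Lemma \ref{olson1}.
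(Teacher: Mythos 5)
Your overall strategy is the right one, and your ``cleaner route'' encoding is essentially the paper's: writing the first coordinate of $v_i$ mod $p^2$ as $\alpha_i + p\beta_i$ with $\alpha_i,\beta_i \in \{0,\dots,p-1\}$ and the second coordinate mod $p$ as $\gamma_i$, one applies Lemma \ref{alon} to $w_i = (\alpha_i,\beta_i,\gamma_i) \in \mathbb{F}_p^3$. A length-$p$ zero-sum then makes both coordinates of $v_{i_1}+\cdots+v_{i_p}$ divisible by $p$, and the first coordinate is $\equiv \alpha_{i_1}+\cdots+\alpha_{i_p} \pmod{p^2}$ (the $p\sum\beta_{i_j}$ term dies mod $p^2$), a multiple of $p$ lying in $[0,p(p-1)]$ --- hence of $p$-adic order exactly $1$ \emph{unless every} $\alpha_{i_j}=0$.

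That exceptional case is the genuine gap in your write-up, and neither of your two fallback arguments closes it. The partial-sum argument fails because a proper initial segment of a zero-sum subsequence will generally not have both entries divisible by $p$, so no proper sub-sum is a candidate for the conclusion; and you cannot argue ``exactly as in Lemma \ref{olson1}'' that $\sum\alpha_{i_j}\ge 1$, because primitivity only forbids \emph{both} coordinates of $v_i$ from being divisible by $p$ --- each $\alpha_{i_j}$ individually may well be $0$. Your second fallback (``peel off repeated classes and apply Lemma \ref{olson1} to the carry data'') is too vague to check and is not how the exceptional case is actually resolved. The paper's resolution is structural and explains the constant $C = C_0+2$: if all $\alpha_{i_j}=0$, the $p$ selected vectors are primitive with first coordinate divisible by $p$, i.e.\ they all lie in the projective class $[0:1]$. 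Discard them and apply Lemma \ref{alon} again to the remaining $\ge C_0p + p$ vectors, and if necessary a third time to the remaining $\ge C_0p$. Either some application produces a nondegenerate zero-sum (done), or three degenerate applications accumulate $3p \ge 3p-2$ primitive vectors in the single class $[0:1]$, and Corollary \ref{olson2} then supplies the required subsequence. You should restructure the end of your argument along these lines.
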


\begin{proof} We prove this result for $t = p$. For each $i$ write $v_i(1) \equiv a_i + pb_i \pmod{p^2}, v_i(2) \equiv c_i \pmod{p}$ with $a_i, b_i, c_i \in \{0, 1, \cdots, p-1\}$. Let \[w_i = \begin{pmatrix} a_i \\ b_i \\ c_i \end{pmatrix} \in \mathbb{F}_p^3, i = 1, \cdots, Cp.\] Lemma \ref{alon} implies that there exists $w_{i_1}, \cdots, w_{i_p}$ for which the entries of $w_{i_1} + \cdots + w_{i_p}$ are multiples of $p$. The first entry mod $p^2$ is given by $a_{i_1} + \cdots + a_{i_p}$ which lies in the interval $[0, p(p-1)]$. Hence if at least one of the entries $a_{i_j}$ is non-zero this sequence satisfies the desired properties. Otherwise the reductions of $v_{i_1}, \cdots, v_{i_p}$ are in the projective class $[0:1]$. In this case remove these vectors from the original set and repeat the argument above twice more. This process either provides a desired sequence or a set of $3p$ vectors in the same projective class. In the latter case Corollary \ref{olson2} provides a desired sequence, thus concluding the proof. 
\end{proof}

\par These results will be used to contract secondary variables among themselves. Different from the case of primary variables, generating secondary variables at levels higher than $\tau$ is not advantageous as such new variables will have no use. For this reason it is important to guarantee that contractions of secondary variables will provide a new variable at a bounded level. In fact, the property that $p^2$ does not divide both entries of certain sums of subsequences will be needed to show that the corresponding contraction generates a secondary variable exactly at the next level.

\section{Outline of The Proof of Theorem \ref{main}}\label{outline}

\par The main goal is to create a primary variable at a level higher than $\tau$. In order to do that it will be necessary to initially create $p^{\tau}$ primary variables at levels greater than zero. Then by contracting secondary variables from levels zero and one we generate secondary variables at levels $1, \cdots, \tau - 1$ which will be used to guarantee that contractions of primary variables can be made effectively. Then we contract primary variables to generate $p$ primary variables at level $\tau$. Lastly we contract the remaining variables to generate $2p-3$ secondary variables at level $\tau$. Theorem \ref{main} then follows by Proposition \ref{main-lemma}.

\par A few remarks concerning the different parts of Theorem \ref{main} are required at this point. For $\tau \ge 2$, the first three steps described in the previous paragraph will be the same. These three steps provide us with $p$ primary variables at level $\tau$ and uses $2p^{\tau+1} + 7p^{\tau}$ variables at levels zero and one, leaving a certain number of variables to be used in the last step. For $\tau = 1$, it will be sufficient to generate $p$ primary variables at level one and use the remaining variables to generate $2p-3$ secondary variables at level one. For each case of Theorem \ref{main} there is a subsection of Section \ref{secondary-tau} describing how to create $2p-3$ secondary variables at level $\tau$.

\par In summary, Section \ref{generating-primary} will be universal for all three parts of Theorem \ref{main}. Sections \ref{stepstones} and \ref{primary-tau} will be needed for $\tau \ge 2$ only. Moreover, since $C \ge 3,$ these sections only rely on the bound $s > 2\frac{p}{p-1}d^2 - 2d$. Lastly, Section \ref{secondary-tau} contains three different subsections dealing with the different cases of Theorem \ref{main}.

\section{Generating Primary Variables}\label{generating-primary}

\par Recall from Section \ref{variables} that $m_0 \ge \frac{s}{d}$ and $q_0 \ge \frac{s}{2d}$. By the assumptions of Theorem \ref{main}, we have $s > 2\frac{p}{p-1}d^2 - 2d$. Therefore \[m_0 \ge 2p^{\tau + 1} - 1 \text{ and } q_0 \ge p^{\tau + 1}.\] We now select a set $\mathcal{H}$ of at most $2p^{\tau + 1}$ variables at level zero and use it to create $p^{\tau}$ primary variables at levels greater than zero. The set $\mathcal{H}$ will satisfy $\# \mathcal{H} \ge 2p^{\tau+1} -1$ and $q(\mathcal{H}) \ge p^{\tau+1}$. The need to find such a set comes from the following combinatorial result of Davenport and Lewis (c.f. Lemma 5.1 in \cite{brudern2002artin}):

\begin{lemma}\label{primary-var} Let $\mathcal{H}$ be a set of variables at level zero. Then $\mathcal{H}$ contains at least \[\min \left(\left\lfloor\frac{\#\mathcal{H}}{2p-1}\right\rfloor, \left \lfloor \frac{q(\mathcal{H})}{p}\right \rfloor\right)\] pairwise disjoint contractions to primary variables at levels greater than zero.
\end{lemma}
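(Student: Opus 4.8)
The plan is to construct a single contraction combinatorially and then peel off pairwise disjoint copies greedily, tracking simultaneously how both $\#\mathcal H$ and $q(\mathcal H)$ shrink. Identify a level-zero variable $x_i$ with its primitive reduction $v_i=\binom{a_i}{b_i}\in\mathbb F_p^2\setminus\{\mathbf 0\}$; the vectors $v_i$, $i\in\mathcal H$, fall into at most $p+1$ projective classes, and $q(\mathcal H)$ equals $\#\mathcal H$ minus the size of the largest class. Given $S\subseteq\mathcal H$ and units $t_i\in\mathbb Z_p^{\times}$, the substitution $x_i\mapsto z\,t_i$ ($i\in S$), $x_j\mapsto 0$ ($j\notin S$) gives the new variable $z^d$ the coefficient pair $\big(\sum_{i\in S}a_it_i^{d},\ \sum_{i\in S}b_it_i^{d}\big)$. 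Since $t^{d}\equiv 1\pmod p$ for a unit $t$, each entry is congruent mod $p$ to $\sum_{i\in S}a_i$, respectively $\sum_{i\in S}b_i$; so if $\sum_{i\in S}v_i=\mathbf 0$ in $\mathbb F_p^2$ the new variable sits at level $\ge 1$, and if moreover $\{v_i:i\in S\}$ has rank two then the underlying mod-$p$ zero is non-singular, so the new variable traces back to a non-singular solution at level zero, i.e. it is primary. Thus a contraction to a primary variable at a level $>0$ is precisely a subset $S$ with $|S|\le 2p-1$, with $\sum_{i\in S}v_i=\mathbf 0$, and with $S$ not contained in a single projective class.

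The heart of the matter is a one-contraction statement: \emph{if $\mathcal H'\subseteq\mathcal H$ consists of level-zero variables with $\#\mathcal H'\ge 2p-1$ and $q(\mathcal H')\ge p$, then $\mathcal H'$ contains such a subset $S$, which can in addition be chosen ``thin'' enough that deleting it lowers $q$ by at most $p$.} I would prove this by a case analysis on how the $v_i$ distribute among the projective classes. After a $\mathrm{GL}_2(\mathbb F_p)$ change of basis, assume the largest class is $[1:0]$, so there are $n_1=\#\mathcal H'-q(\mathcal H')$ vectors with $b_i=0$, $a_i\ne 0$, and at least $q(\mathcal H')\ge p$ vectors with $b_i\ne 0$. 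Among the latter, Corollary \ref{C-D-2} yields a nonempty $T_0$ with $|T_0|\le p$ and $\sum_{i\in T_0}b_i\equiv 0\pmod p$; set $c=-\sum_{i\in T_0}a_i$. If $c\not\equiv 0$, then since $n_1\ge p-1$ pick $p-1$ vectors of class $[1:0]$ and, by Lemma \ref{C-D-1}, a subset $T_1$ of them with $\sum_{i\in T_1}a_i\equiv c$; then $S=T_0\cup T_1$ satisfies $\sum_{i\in S}v_i=\mathbf 0$, meets $[1:0]$ through the nonempty $T_1$ and its complement through $T_0$, has $|S|\le(p-1)+p=2p-1$, and carries only $|T_0|\le p$ vectors off the dominant class. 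The remaining configurations — $c\equiv 0$, or $T_0$ already of rank two, or a class so dominant that $q(\mathcal H')\ge p$ forces an immediate solution via Corollary \ref{C-D-3} — I would dispatch separately, in each case producing an admissible $S$ of size $\le 2p-1$.

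For the greedy extraction, set $\mathcal H_0=\mathcal H$ and iterate: having produced disjoint $S_1,\dots,S_{j-1}$ with $|S_i|\le 2p-1$ and $\mathcal H_{j-1}=\mathcal H\setminus(S_1\cup\cdots\cup S_{j-1})$, the preceding step gives $\#\mathcal H_{j-1}\ge\#\mathcal H-(j-1)(2p-1)$ and $q(\mathcal H_{j-1})\ge q(\mathcal H)-(j-1)p$; so whenever $j-1<\#\mathcal H/(2p-1)$ and $j-1<q(\mathcal H)/p$ the one-contraction statement applies to $\mathcal H_{j-1}$ and yields $S_j$. This produces $\min\!\big(\lfloor\#\mathcal H/(2p-1)\rfloor,\ \lfloor q(\mathcal H)/p\rfloor\big)$ pairwise disjoint contractions to primary variables at levels greater than zero.

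The main obstacle is exactly the one-contraction statement: one must guarantee a non-singular zero mod $p$ that is simultaneously short (at most $2p-1$ variables) and thin with respect to every projective class, so that the greedy accounting on $q$ actually closes. The delicate cases are the borderline ones — where the second-largest class contains only about $q(\mathcal H')/p$ vectors, or where the natural zero-sum subset happens to be rank one — and handling them uniformly within the $2p-1$ and $p$ budgets is where the real work lies.
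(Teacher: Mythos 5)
First, a point of comparison: the paper does not prove this lemma at all --- it is quoted verbatim as Lemma 5.1 of \cite{brudern2002artin}, going back to Davenport and Lewis, so your reconstruction is being measured against the classical argument rather than against anything in this text. Your setup is correct: a contraction to a primary variable at positive level is exactly a nonempty subset $S$ whose reductions $v_i$ sum to $\mathbf{0}$ in $\mathbb{F}_p^2$ and do not all lie in one projective class; the identification $q(\mathcal{H})=\#\mathcal{H}$ minus the largest class is correct; and the greedy scheme of repeatedly extracting such an $S$ with $|S|\le 2p-1$ while controlling $q$ is the right skeleton. The main case you actually carry out ($c\not\equiv 0$ with a large dominant class) is sound.

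There are two genuine gaps. (1) You assert $n_1\ge p-1$, but nothing gives you that: you only assume $q(\mathcal{H}')\ge p$ from \emph{below}, and there is no upper bound on $q(\mathcal{H}')$, so the dominant class can have as few as $\lceil \#\mathcal{H}'/(p+1)\rceil$ elements (e.g.\ $2p-1$ vectors split into $p+1$ classes of size about $2$). In that regime the Cauchy--Davenport step that builds $T_1$ inside the dominant class is unavailable. One still gets a nonempty zero-sum subsequence of the $2p-1$ vectors (the Davenport constant of $\mathbb{F}_p^2$ is $2p-1$), but it may have rank one, and upgrading it to rank two within the $2p-1$ and $p$ budgets is precisely the hard case of the Davenport--Lewis proof; your list of ``remaining configurations'' does not even include it. (2) The invariant $q(\mathcal{H}_{j})\ge q(\mathcal{H})-jp$ does not follow from ``each $S_j$ carries at most $p$ vectors off the current dominant class.'' Since $q(\mathcal{H}_j)$ is $\#\mathcal{H}_j$ minus the size of the largest class \emph{of $\mathcal{H}_j$}, the dominant class can change after a removal (e.g.\ two classes tied for largest and $S_j$ drawn mostly from one of them), and then $q$ can drop by as much as $|S_j|\le 2p-1$ in a single step. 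The standard repair is to fix one class $C$ at the outset and track $\#(\mathcal{H}_j\setminus C)$ rather than $q(\mathcal{H}_j)$, insisting that each contraction use at most $p$ variables outside $C$ and at most $p-1$ inside $C$ --- but then the one-contraction statement must be proved relative to this fixed, possibly no-longer-dominant class, which reshuffles the case analysis. As written, the proposal is an accurate outline of the known proof with its two hardest steps left unproved.
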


\par As in \cite{brudern2002artin} let $I_0$ be the number of variables in the class $[1:0]$ at level zero. Without loss of generality it can be assumed that $[1:0]$ is the class containing the most variables at level zero so that $m_0 = I_0 + q_0$. In order to prove the existence of the set $\mathcal{H}$ we consider two cases.

\par First assume that $I_0 \ge p^{\tau + 1}$. Start removing variables from the class $[1:0]$ until there are only $p^{\tau + 1}$ variables left. Then remove variables from the other classes until there are only $p^{\tau + 1}$ variables left. Let $\mathcal{H}$ be the set of remaining variables. It is clear that $\#\mathcal{H} = 2p^{\tau + 1}$ and $q(\mathcal{H}) = p^{\tau + 1}$.

\par Second assume that $I_0 \le p^{\tau + 1} - 1$. Notice that $q_0 = m_0 - I_0 \ge p^{\tau+1}$ so that we can remove variables from classes different from $[1:0]$ until there are only $2p^{\tau+1} - 1$ variables left in total. Let $\mathcal{H}$ be this set of variables. Notice that $[1:0]$ is still the class with most variables in the remaining set. Therefore $\# \mathcal{H} = 2p^{\tau +1} - 1$ and $q(\mathcal{H}) \ge p^{\tau + 1}$.

\par The existence of such a set $\mathcal{H}$ guarantees that there is a set of at most $2p^{\tau + 1}$ variables at level zero which can be turned into a set of $p^{\tau}$ primary variables at higher levels. Let $t_l$ be the number of primary variables generated at level $l$ through this process. Observe that if $t_l \ge 1$ for $l \ge \tau+1$ there is nothing more to prove as this will be a primary variable at level at least $\tau + 1$. Hence it can be assumed that \[t_1 + \cdots + t_{\tau} = p^{\tau}.\]
\par The goal is to contract these primary variables until level $\tau + 1$ is reached. The main method for contracting these variables is given by Lemma \ref{olson} and consists of selecting a group of $3p-2$ primary variables and finding a subsequence of at most $p$ of them summing up to zero. However, once the number of primary variables is less than $3p-2$ but at least $p$, Proposition \ref{main-lemma} makes such contractions still possible as long as there are a few secondary variables available at that level.

\par The contraction of primary variables will be described in Section \ref{primary-tau}, after the proper arrangements with secondary variables have been taken care of in Section \ref{stepstones}.

\section{Secondary Variables as Stepstones}\label{stepstones}

\par The goal of this section is to create secondary variables at levels $1, 2, \cdots, \tau - 1$ for $\tau \ge 2$. These will be used as stepstones for the contractions of primary variables. More precisely, we will create at least $3p^2-p-2$ secondary variables at levels $1, 2, \cdots, \tau - 2$ and at least $4p-6$ secondary variables at level $\tau - 1$.

\par As previously remarked, secondary variables at levels greater than $\tau$ have no use in the argument. For this reason, it is important to make sure that a contraction of secondary variables is at a bounded level. The following result follows immediately from Corollary \ref{olson3}.

\begin{lemma}\label{contraction1}
Consider $m \ge 3p^2 - 2$ variables at the same level $l$. These variables can be contracted into \[\left \lceil \frac{m+3 - 3p^2}{p} \right \rceil\] variables at level $l+1$. Moreover after these contractions at least $3p^2 - p - 2$ variables are left at level $l$.
\end{lemma}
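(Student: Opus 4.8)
The plan is to iterate Corollary \ref{olson3}, keeping track of how many level-$l$ variables each contraction consumes. First I would record precisely what one contraction does: a variable at level $l$ has coefficient column $p^l v$ with $v \in \mathbb{Z}_p^2$ a primitive vector, and given indices $i_1,\dots,i_t$ supplied by Corollary \ref{olson3} --- so $1 \le t \le p$ and $v_{i_1}+\cdots+v_{i_t} = pw$ with $w$ primitive --- the substitution $x_{i_1} = \cdots = x_{i_t} = y$ turns the contribution $\sum_j p^l v_{i_j} x_{i_j}^d$ into $p^{l+1} w\, y^d$, i.e. it replaces these $t$ variables by one new variable $y$ at level exactly $l+1$. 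Hence, as long as at least $3p^2-2$ variables remain at level $l$, one more contraction is available, it consumes at most $p$ of them, and it produces a single variable at level $l+1$; successive contractions are automatically pairwise disjoint, since each deletes its variables from level $l$ while the variable it creates lives at level $l+1$ and is never reused.

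I would then perform exactly $K := \lceil (m + 3 - 3p^2)/p \rceil$ contractions in a row. To check this is legitimate, observe that before the $j$-th contraction ($1 \le j \le K$) at most $(j-1)p \le (K-1)p$ variables have been removed from level $l$, so at least $m - (K-1)p$ remain. The elementary estimate $\lceil a/p\rceil \le (a+p-1)/p$, applied with $a = m+3-3p^2$, gives $K \le (m + p + 2 - 3p^2)/p$, whence $(K-1)p \le m - 3p^2 + 2$ and therefore $m - (K-1)p \ge 3p^2 - 2$. So Corollary \ref{olson3} applies at every one of the $K$ steps, and we obtain the promised $K = \lceil (m+3-3p^2)/p\rceil$ variables at level $l+1$.

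For the ``moreover'' clause I would note that the $K$ contractions together remove at most $Kp \le m + p + 2 - 3p^2$ variables from level $l$, so at least $m - (m + p + 2 - 3p^2) = 3p^2 - p - 2$ variables survive at level $l$, which is exactly the claim.

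I do not anticipate any real difficulty: the argument is pure bookkeeping built on a single invocation of Corollary \ref{olson3} per step. The one place needing a moment's care is making the two numerical bounds in the statement hold simultaneously --- that performing $\lceil (m+3-3p^2)/p\rceil$ contractions is possible \emph{and} still leaves $3p^2-p-2$ variables behind --- which both reduce to the inequality $p\lceil (m+3-3p^2)/p\rceil \le m + p + 2 - 3p^2$.
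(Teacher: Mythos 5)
Your proof is correct and follows exactly the route the paper intends: the paper simply declares the lemma ``immediate from Corollary \ref{olson3},'' and your argument supplies the routine bookkeeping (each contraction consumes at most $p$ variables, lands at level exactly $l+1$ because the divided sum is primitive, and the inequality $p\lceil (m+3-3p^2)/p\rceil \le m+p+2-3p^2$ yields both numerical claims). Nothing to add.
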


\par Recall from Section \ref{variables} that $m_0 + m_1 \ge 2\frac{s}{d}$.  In particular, \[(m_0 - 2p^{\tau+1}) + m_1 \ge 2p^{\tau+1}-3 \ge 7p^{\tau}.\]

\par Select $7p^{\tau}$ variables from levels zero and one which are not in $\mathcal{H}$. Notice that by Lemma \ref{contraction1} these can be contracted into $7p^{\tau - 1} - 3p$ variables at level $1$. These variables can be contracted into $7p^{\tau-2} - 3p - 3$ variables at level $2$, and in turn these can be contracted into $7p^{\tau - 3} - 3p - 3$ variables at level $3$. Inductively, the levels $1, 2, \cdots, \tau - 2$ will have at least $3p^2 - p -2$ variables left from these contractions and level $\tau - 1$ will have at least $7p - 3p - 3 > 4p-6$ variables. 

\par These variables will now be used for contractions of primary variables. This is described in detail in the next section.

\section{Primary Variables at Level $\tau$}\label{primary-tau}

\par The goal of this section is to generate $p$ primary variables at level $\tau$ for $\tau \ge 2$. First, notice that as long as there are $3p-2$ primary variables at level $l$, Lemma \ref{olson} guarantees a contraction using at most $p$ primary variables. Then at the moment these contractions can no longer be made, there will be at least $2p-2$ and at most $3p-3$ primary variables at that level. In that case at most two contractions will be made using the secondary variables constructed in Section \ref{stepstones}. This is described below.

\par In case $1 \le l \le \tau - 2$, Section \ref{stepstones} ensures that there are at least $3p^2-p-2$ secondary variables at level $l$. As $3p^2-p-2 > p^2-p-1$ there are $p-1$ secondary variables in the same projective class. By Corollary \ref{C-D-3} there is a contraction using at most $p$ primary variables and $p-1$ secondary variables. After that, in case there are still $p$ primary variables remaining at level $l$, there is one more contraction following the same argument. This is because there are still $p-1$ secondary variables in the same projective class.

\par In case $l = \tau - 1$, Section \ref{stepstones} ensures that there are at least $4p-6$ secondary variables at level $l$. Since at most two more contractions will be made, $2p-3$ secondary variables will be assigned to each possible group of $p$ primary variables. Using Proposition \ref{main-lemma} one or two contractions will be made at this level, according to the number of primary variables available.

\par With such contractions in mind notice that if at any point a level $l \le \tau - 1$ contains $t$ primary variables, then these variables can be contracted into $\lfloor \frac{t}{p} \rfloor$ primary variables at higher levels. Without loss of generality assume that no contraction will generate a primary variable at a level $l \ge \tau +1$ as that variable guarantees a solution for the system $(2)$ immediately. We now present the following result:

\begin{lemma}\label{contraction2}
Consider $p^{\tau}$ variables distributed over levels $1, 2, \cdots, \tau$. Assume that a level $l < \tau$ has $t$ variables at some point throughout the contractions. Then these variables can be contracted into $\lfloor \frac{t}{p} \rfloor$ variables at higher levels. This can be done at least once per level. Then there exists a sequence of contractions which generates $p$ variables at level $\tau$.
\end{lemma}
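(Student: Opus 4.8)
The plan is to set up a simple bookkeeping argument tracking the "weight" of the primary variables as they are pushed up through the levels. Assign to a variable at level $l$ the weight $p^l$, so that the total weight of the initial $p^\tau$ variables distributed over levels $1,\dots,\tau$ is at least $p^\tau \cdot p = p^{\tau+1}$ (in fact exactly $\sum_l t_l p^l$ with $\sum_l t_l = p^\tau$, but I only need a lower bound). A contraction at level $l$ consumes at most $p$ variables summing to $\mathbf{0}$ mod $p$ and produces one variable at level $\ge l+1$; crucially, it never decreases total weight, since $p$ variables of weight $p^l$ are replaced by one variable of weight $\ge p^{l+1} = p\cdot p^l$. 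So the total weight is non-decreasing under contractions, and it is invariant exactly when every contraction uses exactly $p$ variables and raises the level by exactly $1$.

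First I would dispose of the trivial escape: if at any stage a contraction produces a variable at level $\ge \tau+1$, we are done by the global convention already in force (such a variable solves $(2)$), so assume throughout that all primary variables stay at levels $1,\dots,\tau$. Next, the contraction mechanism: by the discussion preceding the lemma (Lemma \ref{olson} when a level holds $\ge 3p-2$ variables, and Proposition \ref{main-lemma} / Corollary \ref{C-D-3} together with the stepstone secondary variables from Section \ref{stepstones} when a level holds between $p$ and $3p-3$ variables), whenever a level $l<\tau$ holds $t\ge p$ primary variables we can perform a contraction using at most $p$ of them; iterating, we can reduce the population at level $l$ to fewer than $p$, having sent $\lfloor t/p\rfloor$ variables up — and the stepstone reserves ($\ge 3p^2-p-2$ secondary variables per level for $l\le\tau-2$, $\ge 4p-6$ at level $\tau-1$) are exactly enough to license at least one, and in fact all needed, such sub-$3p-2$ contractions at each level, which is the content of the "at least once per level" clause.

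Now run the process greedily from the bottom up: fully drain level $1$ into level $2$, then level $2$ into level $3$, and so on up to level $\tau-1$ into level $\tau$, never touching level $\tau$. At the end every level $1,\dots,\tau-1$ holds at most $p-1$ primary variables, so the weight sitting below level $\tau$ is at most $(p-1)(p+p^2+\cdots+p^{\tau-1}) = (p-1)\cdot\frac{p^\tau-p}{p-1} = p^\tau - p < p^\tau$. Since total weight is $\ge p^{\tau+1}$ and is non-decreasing, the weight now concentrated at level $\tau$ is at least $p^{\tau+1} - (p^\tau - p) > p^{\tau+1} - p^\tau$; as every variable at level $\tau$ has weight exactly $p^\tau$, the number $N$ of primary variables at level $\tau$ satisfies $N p^\tau > p^{\tau+1}-p^\tau$, i.e. $N > p-1$, hence $N \ge p$. (Here I used that no variable ever reaches level $\tau+1$, so all the accumulated weight is genuinely at level $\tau$.) That produces the desired $p$ primary variables at level $\tau$.

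The main obstacle I anticipate is not the weight count — that is elementary — but making airtight the claim that the sub-$3p-2$ contractions can actually be carried out at \emph{every} level with the secondary stepstones available, i.e. that Section \ref{stepstones}'s reserves are not exhausted prematurely. This needs the observation that at each level at most two such "assisted" contractions occur (since once a level drops below $3p-2$ it will drop below $p$ after at most two more contractions of size $\le p$, as $3p-3 - p - p < p$), so allocating $2p-3$ secondary variables per level suffices; one must also check that the assisted contractions, built from Corollary \ref{C-D-3} / Proposition \ref{main-lemma}, indeed raise the level by at least $1$ and do not create a primary variable above level $\tau$ prematurely — but that is guaranteed by our standing assumption plus the structure of those lemmas. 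The rest is the routine geometric-series bound above.
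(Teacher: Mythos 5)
Your proof is correct, but it takes a different route from the paper's. The paper argues by induction on $\tau$: with $x$ variables at level $1$, one contraction there leaves $p^{\tau}-x+\lfloor x/p\rfloor\ge p^{\tau-1}$ variables at levels $2,\dots,\tau$ (the minimum being attained at $x=p^{\tau}$), and the inductive hypothesis applied to levels $2,\dots,\tau$ finishes the argument in three lines. You instead run a global potential argument: weight $p^{l}$ per variable at level $l$, monotonicity of total weight under contractions of at most $p$ variables, and the bound $(p-1)(p+\cdots+p^{\tau-1})=p^{\tau}-p$ on the weight stranded below level $\tau$, giving $N>p-1$ at level $\tau$. Both computations are sound and are really two bookkeepings of the same loss of at most $p-1$ variables per level; the induction is shorter, while your invariant makes it transparent that the order of contractions and the exact sizes of individual contractions are irrelevant, and it degrades gracefully if a contraction skips levels. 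One remark: most of your final paragraph (availability of the stepstone secondary variables, the count of at most two assisted contractions per level via $3p-3-2p<p$) addresses the justification of the lemma's \emph{hypothesis} — that a level with $t$ variables can indeed be contracted into $\lfloor t/p\rfloor$ variables at higher levels — which the paper handles in Sections \ref{stepstones} and \ref{primary-tau} and deliberately keeps outside this lemma; it is reasonable to flag it, but it is not part of what this lemma asks you to prove.
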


\begin{proof}
The proof goes by induction on $\tau$. For $\tau = 1$ the result is immediate. Assume that it holds for $\tau - 1$ and let $t_l$ be the number of variables at level $l$. Define $t_1 = x, t_2 + \cdots + t_l = p^{\tau} - x$. A contraction at the first level gives us \[p^{\tau} - x + \left \lfloor \frac{x}{p} \right \rfloor \ge p^{\tau - 1}\] variables at levels $2, 3, \cdots, \tau$. The result follows by ignoring the first level and using the induction hypothesis.
\end{proof}

\par Recall that in Section \ref{generating-primary} we constructed $p^{\tau}$ primary variables at levels $1, \cdots, \tau$. This combined with Lemma \ref{contraction2} proves that we can generate $p$ primary variables at level $\tau$. The last step of the argument consists of creating $2p-3$ secondary variables at level $\tau$. To that end, different methods for contractions will be employed for the different cases of Theorem \ref{main}. Section \ref{secondary-tau} covers each of these cases separately.

\section{Secondary Variables at Level $\tau$} \label{secondary-tau}

\par The goal of this section is to generate $2p-3$ secondary variables at level $\tau$ for each of the cases described in Theorem \ref{main}. This, combined with the $p$ primary variables already at that level and Proposition \ref{main-lemma}, will be enough to finish the proof of Theorem \ref{main}. 

\par Notice that for $\tau \ge 2$ we have only used $2p^{\tau+1} + 7p^{\tau}$ variables so far from levels zero and one, and for $\tau = 1$ we have only used $2p^2$ variables from level zero to do contractions. The normalization of the system $(1)$ gives us $m_0 + \cdots + m_{\tau} \ge (\tau + 1) \frac{s}{d}$. We now study each case of Theorem \ref{main} separately in the following subsections.

\subsection{} Let $\tau \ge 3, s > 2\frac{p}{p-1}d^2 - 2d$, and $p \ge 7$.

\par Observe that  \[(m_0 + m_1 - 2p^{\tau+1} - 7p^{\tau}) + m_2 + \cdots + m_{\tau} \ge 2\tau p^{\tau+1} - 7p^{\tau} - 2\tau -1 > 6p^{\tau + 1} - 8p^{\tau}.\] 

\par Consider now a set of $6p^{\tau+1} - 8p^{\tau}$ secondary variables available. Let $s_l$ be the number of these variables at level $l$. We now use Lemma \ref{contraction1} to contract variables at levels $0, 1, \cdots, \tau - 1$ in this order. It is clear that this process generates at least

\[\frac{\frac{\frac{s_0+3-3p^2}{p}+s_1+3-3p^2}{p}+ \cdots + s_{\tau-1} + 3 - 3p^2}{p} + s_{\tau}\]
secondary variables at level $\tau$. Notice that $s_0 + ps_1 + \cdots + p^{\tau}s_{\tau} \ge s_0+s_1 + \cdots + s_{\tau} = 6p^{\tau+1} - 8p^{\tau}$ and moreover most of the terms $\frac{3}{p^k}$ cancel out in the expression above. Hence there will be at least $6p-8 - 3p-3+ \frac{3}{p^{\tau}}+\frac{3}{p^{\tau-1}}  > 3p-11$ secondary variables at level $\tau$. Since $3p - 10 \ge 2p-3$ there will be $2p-3$ secondary variables at that level. The result follows.

\subsection{} Let $\tau = 2, s > 2\frac{p}{p-1} d^2 - 2d$, and $p \ge \frac{C}{2} + 4$.

\par As in the previous case there will be at least \[(m_0+m_1 - 2p^3 - 7p^2) + m_2 \ge 4p^3 -7p^2-5 > 4p^{3}-8p^2\] secondary variables available. The following result is an immediate consequence of Lemma \ref{alon1} and allows us to do contractions effectively for large values of $p$.

\begin{lemma} \label{contraction3}
Consider $m$ variables at the same level $l$. These variables can be contracted into \[\left \lceil \frac{m-Cp}{p} \right \rceil\] variables at level $l+1$.
\end{lemma}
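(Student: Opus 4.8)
The plan is to obtain Lemma~\ref{contraction3} from Lemma~\ref{alon1} by the same iterative scheme that produces Lemma~\ref{contraction1} from Corollary~\ref{olson3}, the only change being that the quadratic threshold $3p^2-2$ is replaced by the linear threshold $Cp$. I would first isolate what one contraction does. Write the coefficient pair of each of the $m$ variables at level $l$ as $p^l v_i$ with $v_i\in\mathbb{Z}_p^2$ primitive. As long as at least $Cp$ of these variables remain available, Lemma~\ref{alon1} supplies distinct indices $i_1,\dots,i_t$ with $1\le t\le p$ for which both coordinates of $v_{i_1}+\dots+v_{i_t}$ are divisible by $p$ while at least one is not divisible by $p^2$. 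Setting the corresponding variables equal to a single new variable $y$ turns their joint contribution into $p^{l+1}\alpha'\,y^d$ in the first form and $p^{l+1}\beta'\,y^d$ in the second, with $(\alpha',\beta')$ primitive; thus $y$ is a variable at level exactly $l+1$, and $t\le p$ of the original variables have been removed from level $l$.

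I would then iterate this step, performing contractions on pairwise disjoint subsets of the original variables until fewer than $Cp$ of them remain at level $l$. If $N$ denotes the number of contractions carried out and $r<Cp$ the number of variables left at level $l$, then $m-r$ variables were removed in total, at most $p$ by each contraction, so $Np\ge m-r>m-Cp$. Hence $N>(m-Cp)/p$, and since $N$ is an integer this forces $N\ge\lceil(m-Cp)/p\rceil$ (the assertion being vacuous when $m<Cp$). These $N$ new variables all lie at level $l+1$, which is exactly the conclusion.

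I do not expect any genuine obstacle: the combinatorial substance of the lemma is entirely absorbed into Lemma~\ref{alon1}, hence ultimately into the Alon--Dubiner-type bound of Lemma~\ref{alon}, and what remains is bookkeeping. The two things to keep an eye on are that the clause ``at least one coordinate not divisible by $p^2$'' in Lemma~\ref{alon1} is precisely what keeps the new variable at level $l+1$ rather than some higher, useless level, and the elementary estimate $N\ge\lceil(m-Cp)/p\rceil$; one may alternatively note that, since Lemma~\ref{alon1} is proved with $t=p$, each step removes exactly $p$ variables and $N$ is simply the largest integer with $m-(N-1)p\ge Cp$, which gives the same bound.
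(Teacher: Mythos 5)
Your argument is correct and is precisely what the paper intends: it states Lemma~\ref{contraction3} without proof as ``an immediate consequence of Lemma~\ref{alon1}'', and your iterated-contraction bookkeeping (including the observation that the ``not divisible by $p^2$'' clause pins the new variable at level exactly $l+1$) is the intended justification.
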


Keeping the notation of the previous subsection, this provides us with at least
\[\frac{\frac{s_0-Cp}{p}+s_1-Cp}{p}+ s_{2} \ge 4p-8 - C - \frac{C}{p}\]
secondary variables at level $\tau$. Since $4p-8-C-\frac{C}{p} \ge 2p - 3$ the result follows as before.

\subsection{} Let $\tau = 1, s > (2\frac{p}{p-1} + \frac{C-3}{2p-2})d^2 - 2d$ and $p \ge 5$.

\par As previously remarked, this case only relies on the argument presented in Section \ref{generating-primary}, i.e. we have created $p$ primary variables at level $\tau = 1$ using $2p^2$ primary variables at level zero. There are \[m_0 + m_1 - 2p^2  \ge 2p^2 + (C-3)p -3\] variables available at levels zero and one. By Lemma \ref{contraction3} we can generate $2p-3$ secondary variables at level $\tau$. This completes the proof of Theorem \ref{main}.

\section*{Acknowledgment}

The author happily expresses his gratitude to Professor Christopher Skinner for introducing him to the topic and for helping with many inspiring conversations.

\bibliographystyle{unsrt}
\bibliography{artins_conjecture}{}

\end{document}